\documentclass[a4paper,reqno,11pt]{amsart}
\usepackage{amsmath, amsfonts, amssymb, amsthm, amscd}
\usepackage{graphicx}
\usepackage{psfrag}
\usepackage{perpage}
\usepackage{url}
\usepackage{color}
\usepackage{xcolor}
\usepackage{mathrsfs}
\usepackage{stmaryrd}
\usepackage{mathabx}
\usepackage{scalerel} 
\usepackage{tikz,tikz-cd}\usepackage{caption,subcaption}
\usetikzlibrary{arrows,decorations.pathmorphing,backgrounds,positioning,fit,petri}

\usetikzlibrary{decorations.pathreplacing}
\usepackage{subdepth}

\usepackage{dsfont} 

\usepackage[utf8]{inputenc}
\usepackage[T1]{fontenc}
\usepackage{microtype}
\usepackage{mathtools}
\usepackage[a4paper,scale={0.77,0.74},marginratio={1:1},footskip=7mm,headsep=10mm]{geometry}

\usepackage[linktocpage]{hyperref}
\definecolor{dukeblue}{rgb}{0.0, 0.0, 0.61}

\hypersetup{
	colorlinks=true, linkcolor=dukeblue,
	citecolor=dukeblue
}
\usepackage{fancyhdr}
\usepackage{nameref}

\pagestyle{fancy}
\fancyhf{} 

\chead{THE MASS OF SHRINKING BALLS UNDER THE CRITICAL $2d$ SHF}

\cfoot{\thepage}


\makeatletter
\def\@secnumfont{\bfseries}

\def\section{\@startsection{section}{1}%
  \z@{.7\linespacing\@plus\linespacing}{.5\linespacing}%
  {\normalfont\large\bfseries\centering}}

\def\subsection{\@startsection{subsection}{2}%
  \z@{.5\linespacing\@plus.7\linespacing}{-.5em}%
  {\normalfont\bfseries}}

\def\subsubsection{\@startsection{subsubsection}{3}%
  \z@{.5\linespacing\@plus.7\linespacing}{-.5em}%
  {\normalfont}}

\def\specialsection{\@startsection{section}{1}%
  \z@{\linespacing\@plus\linespacing}{.5\linespacing}%
  {\normalfont\centering\large\bfseries}}
\makeatother

\makeatletter

\renewenvironment{proof}[1][\proofname]{\par
\pushQED{\qed}%
\normalfont \topsep4\p@\@plus4\p@\relax
\trivlist
\item[\hskip\labelsep
\bfseries
#1\@addpunct{.}]\ignorespaces
}{%
\popQED\endtrivlist\@endpefalse
}
\makeatother

\setcounter{tocdepth}{3}

\makeatletter
\newcommand \Dotfill {\leavevmode \leaders \hb@xt@ 6pt{\hss .\hss }\hfill \kern \z@}
\makeatother

\makeatletter
\def\@tocline#1#2#3#4#5#6#7{\relax
  \ifnum #1>\c@tocdepth 
  \else
    \par \addpenalty\@secpenalty\addvspace{#2}%
    \begingroup \hyphenpenalty\@M
    \@ifempty{#4}{%
      \@tempdima\csname r@tocindent\number#1\endcsname\relax
    }{%
      \@tempdima#4\relax
    }%
    \parindent\z@ \leftskip#3\relax \advance\leftskip\@tempdima\relax
    \rightskip\@pnumwidth plus4em \parfillskip-\@pnumwidth
    #5\leavevmode\hskip-\@tempdima
      \ifcase #1
       \or\or \hskip 1.65em \or \hskip 3.3em \else \hskip 4.95em \fi%
      #6\nobreak\relax
    \Dotfill
    \hbox to\@pnumwidth{\@tocpagenum{#7}}\par
    \nobreak
    \endgroup
  \fi}
\makeatother

\makeatletter
\def\l@section{\@tocline{1}{0pt}{1pc}{}{}}
\renewcommand{\tocsection}[3]{%
\indentlabel{\@ifnotempty{#2}{\ignorespaces#1 #2.\hskip 0.7em}}#3}
\def\l@subsection{\@tocline{2}{0pt}{1pc}{5pc}{}}

\def\l@subsubsection{\@tocline{3}{0pt}{1pc}{7pc}{}}

\makeatother

\setcounter{secnumdepth}{3}

\frenchspacing

\numberwithin{equation}{section}


\newtheoremstyle{mytheorem}{.7\linespacing\@plus.3\linespacing}{.7\linespacing\@plus.3\linespacing}%
     {\itshape}
     {}
     {\bfseries}
     {. }
     {0.3ex}
     {\thmname{{\bfseries #1}}\thmnumber{ {\bfseries #2}}\thmnote{ (#3)}}  

\theoremstyle{mytheorem}

\newtheorem{theorem}{Theorem}[section]
\newtheorem{lemma}[theorem]{Lemma}
\newtheorem{proposition}[theorem]{Proposition}


\newcommand{\bbE}{{\ensuremath{\mathbb E}} }

\newcommand{\bbN}{{\ensuremath{\mathbb N}} }

\newcommand{\bbP}{{\ensuremath{\mathbb P}} }

\newcommand{\bbR}{{\ensuremath{\mathbb R}} }

\newcommand{\bbZ}{{\ensuremath{\mathbb Z}} }




\newcommand{\cC}{{\ensuremath{\mathcal C}} }

\newcommand{\cS}{{\ensuremath{\mathcal S}} }

\newcommand{\cU}{{\ensuremath{\mathcal U}} }

\newcommand{\cZ}{{\ensuremath{\mathcal Z}} }


\newcommand{\gb}{\beta}

\newcommand{\gd}{\delta}


\newcommand{\gt}{\vartheta}

\newcommand{\gl}{\lambda}

\newcommand{\gs}{\sigma}

\newcommand{\go}{\omega}


\DeclareMathSymbol{\leqslant}{\mathalpha}{AMSa}{"36} 
\DeclareMathSymbol{\geqslant}{\mathalpha}{AMSa}{"3E} 
\DeclareMathSymbol{\eset}{\mathalpha}{AMSb}{"3F}     

\newcommand{\sumtwo}[2]{\sum_{\substack{#1 \\ #2}}} 


\newcommand{\R}{\mathbb{R}}

\newcommand{\Z}{\mathbb{Z}}
\newcommand{\N}{\mathbb{N}}

\def\bs{\boldsymbol}

\newcommand{\PEfont}{\mathrm}

\newcommand{\p}{\ensuremath{\PEfont P}}

\newcommand{\E}{\ensuremath{\PEfont  E}}
\renewcommand{\P}{\p}

\newcommand\bE{\ensuremath{\bs{\mathrm{E}}}}

\DeclareMathOperator{\bbcov}{\ensuremath{\mathbb{C}ov}}

\newcommand{\ind}{\mathds{1}}

\renewcommand{\epsilon}{\varepsilon}
\renewcommand{\theta}{\vartheta}
\renewcommand{\rho}{\varrho}

\newcommand\qa{\bigg[}
\newcommand\qb{\bigg]}
\newcommand\ra{\left(}
\newcommand\rb{\right)}

\newcommand\mI{\mathscr{I}}


\newenvironment{myenumerate}{%
\renewcommand{\theenumi}{\arabic{enumi}}%
\renewcommand{\labelenumi}{{\rm(\theenumi)}}%
\begin{list}{\labelenumi}
	{%
	\setlength{\itemsep}{0.4em}%
	\setlength{\topsep}{0.5em}%
	\setlength\leftmargin{2.45em}%
	\setlength\labelwidth{2.05em}%
	\setlength{\labelsep}{0.4em}%
	\usecounter{enumi}%
	}%
	}%
{\end{list}
}

{\end{list}
}

{\end{list}
}

{\end{myenumerate}}

\newenvironment{myitemize}{%
\begin{list}{$\bullet$}%
 	{%
	\setlength{\itemsep}{0.4em}%
	\setlength{\topsep}{0.5em}%
	\setlength\leftmargin{2.65em}%
	\setlength\labelwidth{2.65em}%
	\setlength{\labelsep}{0.4em}%
	}%
	}%
{\end{list}}

{\end{myitemize}}


\MakePerPage[2]{footnote} 



\date{\today}

\newcommand\dd{\mathrm{d}}

\newcommand\sfp{\mathsf p}

\newcommand\sfr{\mathsf r}

\newcommand\bx{\boldsymbol{x}}
\newcommand\by{\boldsymbol{y}}





\definecolor{cadmiumgreen}{rgb}{0.0, 0.42, 0.24}
\definecolor{red(munsell)}{rgb}{0.95, 0.0, 0.24}


\usepackage{mathtools}

\newcommand{\f}{\frac}
\newcommand{\dlog}{\log\log}

\author{Ziyang Liu}
\address{Department of Mathematics\\
University of Warwick\\
Coventry CV4 7AL, UK}
\email{Ziyang.Liu.1@warwick.ac.uk}

\author{Nikos Zygouras}
\address{Department of Mathematics\\
University of Warwick\\
Coventry CV4 7AL, UK}
\email{N.Zygouras@warwick.ac.uk}

\begin{document}
\title{On the moments of the mass of shrinking balls under the Critical $2d$ Stochastic Heat Flow }
\maketitle

\begin{abstract}
    The Critical $2d$ Stochastic Heat Flow (SHF) is a measure valued stochastic process on $\R^2$ that
    defines a non-trivial solution to the two-dimensional stochastic heat equation with multiplicative 
    space-time noise. Its one-time marginals are a.s. singular with respect to the Lebesgue measure, 
    meaning that the mass they assign to shrinking balls decays to zero faster than their Lebesgue volume.
    In this work we explore the intermittency properties of the Critical 2d SHF by studying the  asymptotics of the  $h$-th moment 
    of the mass that it assigns to shrinking balls of radius $\epsilon$ and we determine that its ratio to the Lebesgue volume
    is of order $(\log\tfrac{1}{\epsilon})^{{h\choose 2}}$ up to possible lower order corrections.
    
\end{abstract}


\section{Introduction}

 The Critical $2d$ Stochastic Heat Flow (SHF) was constructed in \cite{CSZ23a} as a non-trivial, i.e. non-constant and non-gaussian, solution
 to the ill-posed two-dimensional Stochastic Heat Equation (SHE)
\begin{align}\label{SHE}
    \partial_t u = \frac{1}{2}\Delta u+\gb \xi u, \qquad t>0, \, x\in \R^2,
\end{align}
where $\xi$ is a space-time white noise. 
We refer to reviews \cite{CSZ24, CSZ26} for an account of the larger context
and developments in the study of the model.

The solution to \eqref{SHE} lives in the space of generalised functions and, therefore, 
multiplication is a priori not defined. So in order to construct a solution one has to first regularise the equation. One way to do so is
by mollification of the noise $\xi_\epsilon(t,x):=\frac{1}{\epsilon^2}\int_{\R^2} j\big( \frac{x-y}{\epsilon}\big) \xi(t, \dd y)$, so that \eqref{SHE} admits a smooth solution $u^\epsilon$, which in fact can also be represented by a Feynman-Kac formula as
\begin{align}\label{eq:FK}
u^\epsilon(t,x)=\bE_x\Big[ \exp\Big( \beta \int_0^t \xi_\epsilon(t-s, B_s) \dd s - 
\frac{\beta^2 t}{2}\|j_\epsilon\|_{L^2(\R^2)}^2 \Big) \Big],
\end{align}
with $B_s$ being a two-dimensional Brownian motion whose expectation when starting from $x\in \R^2$ is denoted by $\bE_x$ and $j_\epsilon(x):=\frac{1}{\epsilon^2}j(\frac{x}{\epsilon})$. Then one needs to establish whether a sensible limit can be defined when $\epsilon\to 0$. As we will discuss below, for this to be the case a precise choice of $\beta$ depending on $\epsilon$ will be required.

Another approach is by a discretisation scheme; in particular by a distinguished discretisation of the 
Feynman-Kac formula, which is related to the model of
Directed Polymer in Random Environment (DPRE), \cite{C17, Z24}. 
The latter is determined by its partition function:
\begin{align}\label{Z(x,y)}
    Z_{M,N}^{\beta}(x,y) :=\E\Big[  \exp\big(\sum^{N-1}_{n=M+1}(\gb\go(n,S_n)-\lambda(\gb))\Big) \ind_{\{S_{N}=y\}} \, |\, S_M=x\Big],
\end{align}
where $(S_n)_{n\geq 0}$ is a simple, two-dimensional random walk, whose law and expectation 
are denoted, respectively, by $\P$ and $ \E$ and $(\omega_{n,x})_{n\in \N, x\in \Z^2}$ is a family of i.i.d. random variables 
with mean $0$, variance $1$ and finite log-moment generating function $\lambda(\beta):=\log\bbE\big[ e^{\beta\omega} \big]<\infty$, for $\beta\in \R$, which serves as the discrete analogue of a space-time white noise. 
 The DPRE regularisation was the one followed in the construction of the Critical 2d SHF in \cite{CSZ23a}.   
 
In either of these approaches, the singularity that the noise induces in two dimensions demands a particular choice
of the temperature $\beta$, which modulates the strength of the noise. In the DPRE regularisation, the 
Critical 2d SHF emerges through the choice of $\beta=\beta_N$ determined by
\begin{align}\label{choiceb}
\sigma_N^2:= e^{\lambda(2\beta_N)-2\lambda(\beta_N)}-1 = \frac{\pi}{\log N} \Big(1+\frac{\theta+o(1)}{\log N} \Big),
\end{align}
where $o(1)$ denotes asymptotically negligible corrections as $N\to \infty$. 
We note that scaling \eqref{choiceb} constitutes a {\it critical scaling}: a phase transition takes place at $\hat\beta=\pi$ if $\pi$ is replaced
by a parameter $\hat\beta >0$, see \cite{CSZ17}.
In the continuous approximation,
 $\beta:=\beta_\epsilon$ is
chosen as
\begin{align}\label{beta}
\beta^2_\epsilon=\frac{2\pi}{\log\tfrac{1}{\epsilon}}\Big( 1+\frac{\rho+o(1)}{\log\tfrac{1}{\epsilon}} \Big),
\end{align}
where $\rho$ is given as a function of the parameter $\theta$ in \eqref{choiceb}
and depends also  on the mollifier $j$ in a particular way. We refer to 
equation (1.38) in \cite{CSZ19b} for the precise relation.  
We also refer to \cite{BC98} for the origins of this temperature scaling.

The Critical $2d$ SHF was constructed in \cite{CSZ23a}  as the unique limit of the fields
\begin{align}\label{preSHF}
    \cZ_{N; \,s,t}^\beta(\dd x, \dd y):=
    \f{N}{4}
    Z _{[  Ns ],[ Nt ]}^{\gb_N}
    \ra
    \llbracket   \sqrt{N}x    \rrbracket
    ,
    \llbracket   \sqrt{N}y   \rrbracket
    \rb
    \dd x \dd y, \qquad 0\le s<t<\infty \,,
\end{align}
where $[\cdot]$ maps a real number to its nearest, even integer neighbour, $\llbracket\cdot\rrbracket$ maps $\bbR^2$ points to 
their nearest, even integer point on $\bbZ^2_\text{even}:=\{ (z_1,z_2)\in\bbZ^2:z_1+z_2\in 2\bbZ \}$, and $\dd x  \dd y$ is the Lebesgue measure on $\bbR^2 \times \bbR^2$.
More precisely,
\begin{theorem}[\cite{CSZ23a}]
    Let $\gb_N$ be as in (\ref{beta}) for some fixed $\gt\in\bbR$ and $\big( \cZ_{N; \,s,t}^\beta(\dd x, \dd y) \big)_{0\le s<t<\infty}$ 
    be defined as in \eqref{preSHF}. Then, as $N\rightarrow\infty$, the process of random measures 
    $(\cZ_{N; s,t}^{\gb}(\dd x,\dd y))_{0\le s\le t<\infty}$ converges in finite dimensional distributions to a unique limit
    \begin{align*}
        \mathscr{Z}^\gt
        =
        (\mathscr{Z}_{s,t}^{\gt}(\dd x,\dd y))_{0\le s\le t<\infty},
    \end{align*}
    named the Critical 2d Stochastic Heat Flow.
\end{theorem}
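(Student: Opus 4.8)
\noindent\emph{Proof strategy.} The plan is to pass to the limit through the \emph{polynomial chaos expansion} of the discrete partition function and then to identify the limit by a multi-scale (coarse-graining) argument, since the non-Gaussian nature of $\mathscr Z^\gt$ rules out a soft central-limit approach. Expanding the exponential in \eqref{Z(x,y)} and using $\E\big[e^{\gb_N\go-\lambda(\gb_N)}\big]=1$, one writes, with $\xi_N(n,z):=e^{\gb_N\go(n,z)-\lambda(\gb_N)}-1$ (an i.i.d.\ field of mean $0$ and variance $\sigma_N^2$) and $q_n(\cdot)$ the simple-random-walk kernel,
\[
Z_{M,N}^{\gb_N}(x,y)=\sum_{k\ge 0}\ \sumtwo{M<n_1<\dots<n_k<N}{z_1,\dots,z_k\in\Z^2}
q_{n_1-M}(z_1-x)\,\Big(\prod_{i=1}^{k}\xi_N(n_i,z_i)\,q_{n_{i+1}-n_i}(z_{i+1}-z_i)\Big),
\]
with the convention $n_{k+1}=N,\ z_{k+1}=y$ (the $k=0$ term being $q_{N-M}(y-x)$). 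Smearing the measure $\cZ_{N;s,t}^{\gb}$ against test functions $\varphi\otimes\psi$ and rescaling diffusively turns $\cZ_{N;s,t}^{\gb}(\varphi\otimes\psi)$ into a sum of multilinear functionals of the noise whose kernels are discretised heat kernels, and the task reduces to proving joint convergence in law of finitely many such functionals (for finitely many quadruples $(s,t,\varphi,\psi)$) together with the consistency of the resulting family.

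The analytic engine is the second-moment computation. Pairing two independent replicas, summing out the walk increments and using $\E[\xi_N^2]=\sigma_N^2$ produces expressions built from the collision kernel $U_N(n):=\sum_{z} q_n(z)^2\sim \tfrac{1}{\pi n}$. Under the critical tuning \eqref{choiceb} the series $\sum_{n\le N}\sigma_N^2\,U_N(n)$ sits exactly at the logarithmic threshold at which $\sigma_N^2\sim \pi/\log N$ compensates the divergence, so after rescaling time by $N$ the pairwise sums converge to a continuum limit governed by a Dickman-type subordinator, with the subleading constant $\gt$ entering through the limiting Green's function $G_\gt$. This yields both convergence of the covariance of $\cZ_{N;s,t}^{\gb}(\varphi\otimes\psi)$ and, crucially, a uniform-in-$N$ second-moment bound, which bootstraps — via a recursive/hierarchical estimate along the same renewal lines — to uniform bounds on all moments.

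To extract the (non-Gaussian) limit one cannot invoke a CLT, so it is identified directly by coarse-graining: partition $(s,t]$ into mesoscopic blocks of length $\delta N$, expand $\cZ_{N;s,t}^{\gb}$ as a polynomial in the block partition functions glued by coarse heat kernels, discard (using the moment bounds) the contribution of configurations in which a block is visited too often, and observe that the surviving coarse-grained model has asymptotically independent block increments whose joint law converges, depending on the data only through $G_\gt$. Letting $N\to\infty$ with $\delta$ fixed and then $\delta\to0$ produces a limiting family of random measures; uniqueness follows because the coarse-grained approximations converge to the same object irrespective of $\delta$ and of the underlying disorder law / lattice (universality), while the Chapman--Kolmogorov relation of the discrete partition functions survives the limit, making the finite-dimensional distributions consistent and defining a genuine process $\mathscr Z^\gt=(\mathscr Z^\gt_{s,t})_{0\le s\le t<\infty}$.

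The main obstacle is the control of the higher-order terms of the chaos expansion, i.e.\ the replica sums in which three or more walks are simultaneously close: these do not factor into iterated pairwise collisions and must be resummed carefully to show that their total contribution stays bounded and converges — this is precisely what forces the coarse-graining machinery rather than a limit theorem, and it is also the source of the subtle fact that the limiting moments are \emph{not} those of a log-normal field. A secondary difficulty is propagating the precise logarithmic tuning \eqref{choiceb} uniformly over the diffusively rescaled space-time range while keeping the limit insensitive to the lattice and the law of $\go$.
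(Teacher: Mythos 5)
This theorem is not proved in the present paper at all: it is quoted verbatim from \cite{CSZ23a}, so there is no internal proof to compare against. What you have written is a faithful high-level summary of the strategy actually used in \cite{CSZ23a} (polynomial chaos expansion of \eqref{Z(x,y)}, second-moment asymptotics at the critical tuning \eqref{choiceb} governed by the Dickman renewal function $G_\theta$, higher-moment bounds, and a coarse-graining scheme to identify the non-Gaussian limit and its uniqueness). In that sense your outline points at the right proof.

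However, as a proof it has genuine gaps precisely at the steps that constitute the content of \cite{CSZ23a}. First, the assertion that the uniform second-moment bound ``bootstraps'' to uniform bounds on all moments is itself a hard theorem (the third moment required \cite{CSZ19b}, general moments the resolvent analysis of \cite{GQT21}, and \cite{CSZ23a} in fact needs moment estimates for the \emph{coarse-grained} block variables, not merely for the full partition function); nothing in your sketch supplies it. Second, the coarse-graining step is stated as a single sentence, but it requires (i) an $L^2$ approximation of $\cZ^{\beta}_{N;s,t}$ by a polynomial in the block variables, uniform in $N$, with errors quantified in $\delta$, (ii) joint convergence in law of the block variables, and (iii) a Cauchy-in-distribution argument showing the limits agree as $\delta\to 0$ — this last point is what actually delivers the \emph{uniqueness} you assert ``follows because the approximations converge to the same object,'' which is circular as written. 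Third, the resummation of configurations where three or more walks collide, which you correctly flag as the main obstacle, is left entirely unresolved. So you have correctly reconstructed the architecture of the known proof, but every load-bearing wall is declared rather than built; to make this a proof you would need to either carry out those estimates or cite them, which is exactly what the paper does by attributing the theorem to \cite{CSZ23a}.
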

$\mathscr{Z}^\gt$ is a {\it measure valued} stochastic process (flow). In fact, its one-time marginals 
\begin{align}\label{one-time}
\mathscr{Z}_t^\theta(\ind, \dd y):=\int_{x\in \R^2} \mathscr{Z}_{0,t}^\theta(\dd x, \dd y)
 \stackrel{d}{=}
 \mathscr{Z}_t^\theta(\dd x, \ind):=\int_{y\in \R^2} \mathscr{Z}_{0,t}^\theta(\dd x, \dd y),
\end{align}
 are singular with respect to Lebesgue: it is proven in \cite{CSZ25} that if
\begin{equation}\label{eq:uniform}
		B(x,\epsilon) := \big\{y \in \R^2 \colon \ |y-x| < \epsilon \big\} \,,
\end{equation}
is the Euclidean ball and 
\begin{align}\label{eq:averaged-SHF}
	\mathscr{Z}_{t}^\theta(B(x,\epsilon))
	:= \int_{y\in B(x,\epsilon) } \mathscr{Z}_t^\theta(\ind, \dd y),
\end{align}
then for any $t > 0$ and $\theta \in \R$, 
\begin{equation}\label{eq:singularity-SHF}
	\text{$\bbP$-a.s.} \qquad
	\lim_{\epsilon \downarrow 0} \; \frac{\mathscr{Z}_{t}^\theta\big(B(x,\epsilon)\big)}{{\rm Vol}(B(x,\epsilon))}
	=\lim_{\epsilon \downarrow 0} \frac{1}{\pi \epsilon^2} \mathscr{Z}_{t}^\theta\big(B(x,\epsilon)\big)
	=0
	\quad \text{for Lebesgue a.e.\ $x\in\R^2$} \,.
\end{equation}
The aim of this work is to investigate the intermittency properties of the Critical 2d SHF  by studying the 
integer moments of the ratio in \eqref{eq:singularity-SHF} and show that, contrary to \eqref{eq:singularity-SHF},
 they grow to infinity as $\epsilon\to0$.
We also determine the growth rate to be a logarithmic power, up to possible sub-logarithmic corrections.
In order to state our result we introduce the notation
\begin{align}\label{Zg}
\mathscr{Z}^\theta_{t}(\varphi):=\int_{\R^2}  \varphi(x) \, \mathscr{Z}_t^\theta(\dd x, \ind),
\end{align}
for any test function $\phi$ on $\R^2$.
Our result then is the following:
\begin{theorem}\label{central}
Let $\cU_{B(0,\epsilon)}(\cdot)$ denote the uniform density on the Euclidean ball of radius $\epsilon$ in $\R^2$:
\begin{equation}\label{eq:uniform}
	\cU_{B(0,\epsilon)}(\cdot) := \frac{1}{\pi \epsilon^2} \, \ind_{B(0,\epsilon)}(\cdot) \qquad
	\text{where } \
	B(0,\epsilon) := \big\{y \in \R^2 \colon \ |y| < \epsilon \big\} \,,
\end{equation}
and let $\mathscr{Z}_{t}^\theta(\cU_{B(0,\epsilon)})$ be defined as in \eqref{Zg} with $\varphi (\cdot)=\cU_{B(0,\epsilon)}(\cdot)$.
For all integer $h\geq 2$, $t>0$ and $\theta\in \R$ there exists a constant $C=C(h,\theta,t)$ such that
\begin{align}\label{thm:mom}
C \big(\log \tfrac{1}{\epsilon}\big)^{h \choose 2} 
\le \bbE\Big[ \Big( \mathscr{Z}_{t}^\theta(\cU_{B(0,\epsilon)}) \Big)^h\Big] 
\le  \big(\log \tfrac{1}{\epsilon}\big)^{{h \choose 2} +o(1) },
\end{align}
with $o(1)$ representing terms that go to $0$ as $\epsilon\to0$.
\end{theorem}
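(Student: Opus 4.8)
The plan is to compute moments via the known representation of the moments of the Critical $2d$ SHF in terms of a system of mutually-interacting Brownian motions (equivalently, via the Dickman-type subordinator / Gaussian-kernel expansion of \cite{CSZ23a}). Recall that for a test function $\varphi$ the $h$-th moment $\bbE[(\mathscr{Z}_t^\theta(\varphi))^h]$ admits an expansion indexed by the set of ``chords'' (pairs $\{i,j\}\subset\{1,\dots,h\}$) recording successive pairwise collisions among $h$ trajectories, where each collision contributes a factor built from the replica Green's function $G_\theta$ of the two-body problem; schematically
\begin{align*}
\bbE\Big[\big(\mathscr{Z}_t^\theta(\varphi)\big)^h\Big]
= \sum_{k\ge 0}\ \sumtwo{\{i_1,j_1\},\dots,\{i_k,j_k\}}{0<s_1<\cdots<s_k<t}
\int (\cdots)\, G_\theta(s_{r+1}-s_r,\cdot)\,(\cdots)\ \varphi^{\otimes h}(\cdots).
\end{align*}
With $\varphi=\cU_{B(0,\epsilon)}$ one exploits scaling: after rescaling space by $\epsilon$, the collisions that occur at times $s\lesssim \epsilon^2$ among trajectories started within distance $\epsilon$ are effectively the ``free'' (diffusive) regime, while the Green's function of the two-body problem has the logarithmic singularity $G_\theta(s,x)\sim \frac{1}{\log(1/|x|)}$-type behaviour that, integrated against a ball of radius $\epsilon$, produces a factor of order $\log\tfrac1\epsilon$ per chord. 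Since a tree/forest of collisions connecting $h$ particles uses $h-1$ chords for a single connected cluster but the moment is dominated by configurations that are as ``connected as possible'', the counting that yields the exponent $\binom{h}{2}$ comes from summing over \emph{all} pairs being allowed to collide once, with the constraint coming from the ordering of collision times; I would make this precise by induction on $h$.

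\textbf{Upper bound.} First I would establish the recursive/Markovian structure: conditioning on the first collision time and the pair $\{i,j\}$ that collides, the remaining $h$ trajectories (two of which have now merged into a correlated pair) evolve and one gets a convolution identity expressing the $h$-point object in terms of lower ones plus a ``diagonal'' two-body factor. The key analytic input is a sharp bound on the two-body replica kernel: the expected mass assigned by $\mathscr{Z}^\theta$ restricted to two trajectories forced to be at distance $\le\epsilon$ is $O(\log\tfrac1\epsilon)$, with the constant controlled. Feeding this into the recursion, each of the $\binom h2$ possible pairs contributes at most one logarithmic factor, and the ordered-time integrals $\int_{0<s_1<\cdots<s_k<t}\prod \dd s_r$ contribute only bounded (in fact decaying in $k$) constants, so the sum over the number $k$ of collisions converges and is dominated by $k=\binom h2$. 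This yields $\bbE[(\mathscr{Z}_t^\theta(\cU_{B(0,\epsilon)}))^h]\le (\log\tfrac1\epsilon)^{\binom h2 + o(1)}$; the $o(1)$ absorbs the sub-leading corrections in \eqref{beta} and the mismatch between the true kernel and its leading logarithmic part, as well as combinatorial factors growing sub-polynomially in $\log\tfrac1\epsilon$.

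\textbf{Lower bound.} For the matching lower bound I would select a single dominant diagram: all $h$ trajectories start within the $\epsilon$-ball, then undergo exactly $\binom h2$ successive pairwise collisions (one per pair, in a fixed compatible time-order, e.g. first all collisions involving particle $1$, then particle $2$, etc.) confined to a region of size $O(1)$ and times in a fixed compact sub-interval of $(0,t)$. Positivity of all terms in the expansion lets me throw away everything else, and on this one diagram each collision genuinely produces a factor $\gtrsim c\log\tfrac1\epsilon$ from the lower bound on the two-body Green's function at scale $\epsilon$, while the spatial/temporal integrals are bounded below by a positive constant depending on $(h,\theta,t)$. Multiplying the $\binom h2$ factors gives $C(\log\tfrac1\epsilon)^{\binom h2}$.

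\textbf{Main obstacle.} The hard part will be making the ``one logarithm per chord, no more'' statement rigorous in the upper bound: one must show that repeated collisions between the \emph{same} pair, or clusters of many near-simultaneous collisions, do not generate extra powers of $\log\tfrac1\epsilon$ beyond $\binom h2$, and that the constants multiplying these configurations (which are combinatorially large in $h$ and $k$) remain sub-polynomial in $\log\tfrac1\epsilon$ so as to fit inside the $o(1)$ in the exponent. This requires a careful iteration of the two-body heat-kernel estimates with uniform control of error terms — in effect, a renewal-theory estimate for the collision structure showing that the $\ell$-fold return of a pair to scale $\epsilon$ costs a bounded factor rather than $(\log\tfrac1\epsilon)^{\ell}$ — together with a combinatorial argument bounding the number of collision patterns with a given chord-set. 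This is where most of the technical work of the paper will go.
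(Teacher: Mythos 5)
There are two genuine gaps, one in each bound, and both stem from the same miscounting of where the exponent $\binom{h}{2}$ actually comes from.

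For the upper bound, your claim that ``each of the $\binom{h}{2}$ possible pairs contributes at most one logarithmic factor'' and that ``the sum over the number $k$ of collisions converges and is dominated by $k=\binom{h}{2}$'' does not match the structure of the moment expansion. Only the \emph{first} collision produces a full factor $\log\tfrac1\epsilon$ (from $\int_{\epsilon^2}^{1} g_{a_1/2}(x_1)^2\,\dd x_1\,\dd a_1 \sim \log\tfrac{1}{\epsilon^2}$, i.e.\ the $\tfrac{1}{u_1}$ singularity). Every subsequent collision, because of the decay $G_\theta(v)\sim v^{-1}(\log\tfrac1v)^{-2}$ of the replica kernel combined with the heat-kernel factors linking consecutive collisions, contributes only a factor of order $\log\log\tfrac1\epsilon$ — in the paper this is the function $f_\lambda$ with $f_\lambda(\epsilon^2)\le(1+\delta_\epsilon)\log\log\tfrac1\epsilon$. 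The exponent $\binom{h}{2}$ is then generated by summing over \emph{all} $m$, with the dominant number of collisions of order $\binom{h}{2}\log\log\tfrac1\epsilon$: schematically $\log\tfrac1\epsilon\sum_m \big[(\binom{h}{2}-1)\log\log\tfrac1\epsilon\big]^m/m! = \log\tfrac1\epsilon\cdot(\log\tfrac1\epsilon)^{\binom{h}{2}-1}$. So the series is emphatically not dominated by $k=\binom{h}{2}$ terms; controlling it requires the Laplace multipliers $\lambda$, the exact combinatorial coefficients $c^m_i$, a truncation at $m\sim C_0\log\log\tfrac1\epsilon$, and an optimisation $\lambda=\lambda_\epsilon$, none of which appears in your sketch.

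For the lower bound the same miscount is fatal to your strategy: a single diagram with exactly $\binom{h}{2}$ collisions, one per pair, yields at most $\log\tfrac1\epsilon\cdot(\log\log\tfrac1\epsilon)^{\binom{h}{2}-1}$, far short of $(\log\tfrac1\epsilon)^{\binom{h}{2}}$, again because after the first collision the $G_\theta$-weights integrate to $O(\log\log)$ rather than $O(\log)$. The paper avoids the diagrammatic route entirely: it invokes the strengthened Gaussian correlation inequality of \cite{CSZ23b}, namely $\bbE[(2\mathscr{Z}^\theta_t(g_{\epsilon^2}))^h]\ge(1+\eta)\,\bbE[(2\mathscr{Z}^\theta_t(g_{\epsilon^2}))^2]^{\binom{h}{2}}$, together with the sharp second-moment asymptotics \eqref{2mom-as}, and then a comparison lemma bounding $\cU_{B(0,R\epsilon)}$ below by $R^{-2}g_{\epsilon^2/2}\ind_{B(0,R\epsilon)}$ and showing that the contribution of $g_{\epsilon^2/2}\ind_{B^c(0,R\epsilon)}$ is a small fraction of the total plus $o((\log\tfrac1\epsilon)^{\binom{h}{2}})$. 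A direct diagrammatic lower bound would instead require summing over $\sim\log\log\tfrac1\epsilon$ collisions, a genuinely harder renewal-type estimate (cf.\ the subcritical analogue in \cite{CZ24}).
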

We note that for $h=2$ the correlation structure of the Critical 2d SHF already provides
the sharp asymptotic 
\begin{align}\label{2mom-as}
\bbE\Big[ \Big(  \mathscr{Z}_{t}^\theta(g_{\epsilon^2}) \Big)^2\Big] \sim C_{t} \log\tfrac{1}{\epsilon}, 
\qquad \text{as $\epsilon\to0$},
\end{align}
see relation (1.21) in \cite{CSZ19b}.
\vskip 2mm
Moments of the Critical 2d SHF field can be expressed in terms of the Laplace transform of the total collision time 
of a system of independent Brownian motions with a {\it critical} delta interaction. This is associated to the Hamiltonian
$-\Delta+\sum_{1\leq i<j\leq h} \delta_0(x_i-x_j)$ on $(\R^2)^h$
  known as the {\it delta-Bose gas} \cite{AFHKKL92, DFT94, DR04}; $\delta_0(\cdot)$ 
  is the  Dirac delta-funtion at $0$.
  This operator is singular and ill-defined due to the delta function. To regularize it, one approach, similar 
  to that used for the SHE can be applied, involving a limiting sequence of operators
  $-\Delta+\sum_{1\leq i<j\leq h} \beta_\epsilon^2 \delta_\epsilon(x_i-x_j)$ on $(\R^2)^h$, 
  where $\beta_\epsilon^2$ is as in \eqref{beta} and $\delta_\epsilon$ a mollification of the delta function with a $j_\epsilon$ as in \eqref{eq:FK}. \cite{DFT94} employs, instead, a regularisation in Fourier space.
  The term {\it critical delta interaction} refers to the constant in $\beta_\epsilon^2$
  in \eqref{beta}  being equal to $2\pi$.
 It is well known that independent Brownian motions in dimension $2$ do not meet, however, when their joint measure is
 tilted through a critical delta-attraction between them, then, in the limit when the regularisation is removed, they do meet and have a nontrivial collision time.
 This has been demonstrated in \cite{CM24} (Proposition 5.1), 
 where it has been established that the local collision time in the case of
 two independent Brownian motions (corresponding to $h=2$ in our setting) has a positive {\it log} -- Hausdorff dimension.
 We also refer to works \cite{Ch24a, Ch24b, Ch25b, Ch25c, Ch25d} for the construction of stochastic processes
 from the delta-Bose gas. 
 
Our approach to obtaining the bounds in Theorem \ref{thm:mom} involves expanding the Laplace
transform of the total collision time of $h$ independent Brownian motions 
in terms of diagrams of pairwise interactions (see Figure \ref{fig:CK2}). Estimating a
diagram of this form was first done in \cite{CSZ19b}\footnote{more precisely, in \cite{CSZ19b} the discrete case of independent two-dimensional random walks was treated but the scaling limit recovers the Brownian situation}
 in the case when the starting points of the Brownian motion  are spread out rather than being
concentrated in a $\epsilon$-ball as we study here. Higher-order collision diagrams were estimated in \cite{GQT21}, again
in the situation of spread out initial points, using an alternative approach, which was
 based on resolvent methods and inspired by  \cite{DFT94, DR04}. For {\it sub-critical} delta interactions,
 higher-order collision diagrams of simple two-dimensional random walks were treated in \cite{CZ23, LZ23, LZ24}. 
In particular, in \cite{CZ23}, collision diagrams involving a number of walks growing up to a rate proportional to the square root of the logarithm of the time horizon were analyzed. In all these cases\footnote{\cite{LZ23} addresses a slightly different setting}, collision diagrams express moments of either the stochastic heat 
 equation or the directed polymer model and all of them address scenarios where moments remain
 bounded. In contrast, here we study the situation where moments blow up in the limit as the size of the balls 
 $\epsilon\to0$. 
 \vskip 2mm
 The lower bound in Theorem \ref{central} is reduced to the Gaussian correlation inequality \cite{R14, LM17} --
 a tool already used in the context of the SHE in \cite{F16, CSZ23b}. The upper bound is more demanding as one needs
 to control the complicated recursions emerging from the collision diagrams. Towards this we were guided by the approach of \cite{CZ23}, which was developed to treat the subcritical case. A number of twists have been necessary in order to deal with the 
 singularities of the critical case, which include introducing suitable Laplace multipliers, optimisation  
 and specific combinatorics.
 \vskip 2mm
 Our theorem leaves open whether higher moments grow, in the limit $\epsilon\to0$, proportionally to
$(\log\tfrac{1}{\epsilon})^{h\choose 2}$, i.e. up to a constant factor,
or whether there are sub-logarithmic corrections that lead to  \\
$\tfrac{1}{(\log \tfrac{1}{\epsilon})^{h\choose 2}}\bbE\Big[ \Big( \mathscr{Z}_{t}^\theta(\cU_{B(0,\epsilon)}) \Big)^h\Big] \to \infty$; our upper bound includes corrections of order 
$|\log \epsilon|^{\frac{1}{|\log\log\log \epsilon|}}$.
If the former assertion holds, then, in conjunction with \eqref{2mom-as}, it suggests that pairwise collisions are almost independent even at critical $\delta$–attraction, although they still exhibit a positive correlation. Independence of collision times in the subcritical attraction regime (in a random-walk setting) was established in \cite{LZ24}.
By contrast, the presence of sub-logarithmic corrections would point to a more intricate correlation structure; capturing such behavior would require more refined techniques for deriving lower bounds. In the subcritical case, lower bounds up to negligible errors—within the directed polymer framework and without reliance on the Gaussian Correlation Inequality—were obtained in \cite{CZ24}. These results also reveal a breakdown of the independence phenomenon in the subcritical regime when $h$ grows sufficiently large relative to the polymer scale.
In our setting, it is therefore natural to ask for which threshold $h = h(\varepsilon)$ the asymptotic behavior in \eqref{thm:mom} ceases to hold.

Furthermore, in more recent work \cite{GN25}, a lower bound of the form $e^{e^{h}}$ was established in the critical case when the SHF is averaged over balls of radius $1$. This naturally raises the question of identifying the transition between the asymptotic behavior in \eqref{thm:mom} and the $e^{e^{h}}$ growth observed by Ganguly–Nam for 
$\bbE\Big[ \Big( \mathscr{Z}_{t}^\theta(\cU_{B(0,r)}) \Big)^h\Big]$ as $r$ between 
 $o(1)$ and $O(1)$ scales.
A deeper understanding of these “phase transition” phenomena would be very interesting, and we hope to investigate them in future work.
\vskip 2mm
Before closing this introduction let us make a connection between our results and the notions of
{\it intermittency} and  {\it multifractality}. These notions are closely related but they are not identical; 
see for example \cite{KKX17} and further references therein.

Intermittency (see \cite{CM94})
 refers to the phenomenon of a random object taking very high values with rather small probability. This is
 captured by a nonlinear growth of its moments with respect to the order $h$ of the moment. 
In the case of random fields, this often leads to observing sparse, high peaks. 

Multifractality (see \cite{F13, BP24}), on the other hand, refers to the phenomenon of a random measure
 exhibiting a range of scales in the structure of its high peaks.
 The fractal spectrum of a random measure $\mu$ on $\R^d$ is captured by the exponent $\xi(h)$ in
the moment asymptotics
\begin{align*}
\bbE\big[\mu(B(x,\epsilon))^h \big] \sim \epsilon^{\xi(h)},\qquad \text{as $\epsilon\to0$},
\end{align*}
for $h\in [0,1]$. This notion is useful in determining the Hausdorff dimension of the support of the measure and
indicate phenomena of localisation (see \cite{BP24} for further information). 
The measure $\mu$ is said to exhibit multifractality if the exponent $\xi(h)$ is a nonlinear
function of $h$. The distinctive features between this formulation and the analogous formulation of intermittency
are the {\it small ball limit} $\epsilon\to0$ and the range of $h\in (0,1)$ -- for intermittency one is rather interested in 
the case of $\epsilon$ being (typically) fixed and $h\to \infty$.

The result of Ganguly-Nam \cite{GN25} establishes a strong form of intermittency for the Critical 2d SHF, while
our result that
\begin{align}\label{eq:multifract}
\bbE\big[\mathscr{Z}_{t}^\theta\big(B(x,\epsilon)\big)^h\,\big] 
\sim \epsilon^{2h} \,\big(\log \tfrac{1}{\epsilon}\big)^{\frac{h(h-1)}{2}+o(1)}, \qquad 
\text{as $\epsilon\to0$ for $2\le h\in \N$},
\end{align}
may suggest that the Critical 2d SHF exhibits multifractality at a logarithmic scale (the anticipated log-scale is
consistent with the
picture established in \cite{CSZ25} that the Critical 2d SHF is in $\cC^{0-}$).
 It would be interesting
to formulate the (logarithmic) multifractality features of the Critical 2d SHF. In this regard, one would need to 
develop methods complementary to those of the present article, 
which would allow for asymptotics similar to \eqref{eq:multifract} but for fractional moments $h\in [0,1]$.
We conjecture that asymptotic \eqref{eq:multifract} extends to $h\in [0,1]$.
\vskip 2mm
The structure of the paper is as follows. In Section \ref{sec:aux} we recall the expression of moments of the Critical 2d SHF in 
terms of collision diagrams as well as certain asymptotics that we will use. In Section \ref{sec:upper} we prove the upper
bound in Theorem \ref{central} and in Section \ref{sec:lower} the lower bound.

\section{Auxiliary results on moments of the Critical 2d SHF}\label{sec:aux}
In this section we review the already established formulas of the Critical 2d SHF.
The reader can find the derivation and further details at references \cite{CSZ19b, CSZ23a, GQT21}.

The first moment of the Critical 2d SHF is given by
\begin{equation}
	\bbE[\mathscr{Z}^\theta_{s,t}(\dd x, \dd y)]
	= \tfrac{1}{2} \, g_{\frac{1}{2}(t-s)}(y-x) \, \dd x \, \dd y \,,
\end{equation}
where $g_t(x)=\frac{1}{2\pi t}e^{-\frac{|x|^2}{2t}}$ is the two-dimensional heat kernel. 
The covariance of the Critical 2d SHF has the expression
\begin{equation} \label{eq:formula-cov}
\begin{aligned}
	\bbcov[\mathscr{Z}^\theta_{s,t}(\dd x, \dd y), \mathscr{Z}^\theta_{s,t}(\dd x', \dd y')]
	&= \tfrac{1}{2} \, K_{t-s}^\theta(x,x'; y, y') \, \dd x \, \dd y \, \dd x' \, \dd y' \,,
\end{aligned}
\end{equation}
where
\begin{equation}
\label{eq:m2-lim}
\begin{split}
	K_{t}^{\theta}(x,x'; y,y')
	&\,:=\, \pi \: g_{\frac{t}{4}}\big(\tfrac{y+y'}{2} - \tfrac{x+x'}{2}\big)
	\!\!\! \iint\limits_{0<a<b<t} \!\!\! g_a(x'-x) \,
	G_\theta(b-a) \, g_{t-b}(y'-y) \, \dd a \, \dd b \,.
\end{split}
\end{equation}
In the above formula $G_\theta(t)$ is 
the derivative of the Volterra function \cite{A10, CM24}
The exact expression of $G_\theta(t)$  is
\begin{align}\label{eq:Dick}
G_\theta(t)=\int_0^\infty \frac{e^{(\theta-\gamma)s} st^{s-1}}{\Gamma(s+1)} \dd s,
\qquad  t\in (0, \infty), 
\end{align}
where $\gamma:=-\int_0^\infty \log u e^{-u} \dd u \approx 0.577...$ 
is the Euler constant and $\Gamma(s)$ is the Gamma function.
For $t\in (0,1)$, \eqref{eq:Dick} {may also take the form
\begin{align}\label{eq:Dick2}
G_\theta(t)=\int_0^\infty e^{\theta s} f_s(t) \, \dd s,
\end{align}
where $f_s(t)$ coincides with
 the density of the Dickman subordinator $(Y_s)_{s>0}$ -- a jump process with L\'evy measure 
$x^{-1} \ind_{x\in (0,1)} \dd x$, see \cite{CSZ19a}.

The Laplace transform of \eqref{eq:Dick} has a simple form, which will be useful in our analysis and so we record 
it here:
\begin{proposition}\label{LaplaceG}
Let $G_\theta(t)$ be as in \eqref{eq:Dick} for $t>0$. Then for $\lambda> e^{\theta-\gamma}$ we have that
\begin{align*}
\int_0^\infty e^{-\lambda t} G_\theta(t) \,\dd t =  \frac{1}{\log\lambda-\theta + \gamma}.
\end{align*}
\end{proposition}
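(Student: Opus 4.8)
The plan is to evaluate the Laplace transform by a direct interchange of the two integrations, followed by the elementary Gamma-function identity. First I would insert the explicit formula \eqref{eq:Dick} and write
\begin{equation*}
\int_0^\infty e^{-\lambda t}\, G_\theta(t)\,\dd t
= \int_0^\infty e^{-\lambda t}\Big(\int_0^\infty \frac{e^{(\theta-\gamma)s}\, s\, t^{s-1}}{\Gamma(s+1)}\,\dd s\Big)\dd t .
\end{equation*}
Since the integrand is nonnegative, Tonelli's theorem justifies swapping the order of integration with no a priori integrability check, yielding
\begin{equation*}
\int_0^\infty \frac{e^{(\theta-\gamma)s}\, s}{\Gamma(s+1)}\Big(\int_0^\infty e^{-\lambda t}\, t^{s-1}\,\dd t\Big)\dd s .
\end{equation*}

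Next I would compute the inner integral by the substitution $u=\lambda t$, obtaining $\int_0^\infty e^{-\lambda t}\, t^{s-1}\,\dd t = \Gamma(s)\,\lambda^{-s}$ for every $s>0$ and $\lambda>0$. Plugging this in and using $\Gamma(s+1)=s\,\Gamma(s)$, the factor $s\,\Gamma(s)$ cancels, leaving
\begin{equation*}
\int_0^\infty e^{(\theta-\gamma)s}\,\lambda^{-s}\,\dd s
= \int_0^\infty e^{-(\log\lambda-\theta+\gamma)\,s}\,\dd s .
\end{equation*}
This last integral is finite precisely when $\log\lambda-\theta+\gamma>0$, i.e.\ when $\lambda>e^{\theta-\gamma}$, and in that case it equals $(\log\lambda-\theta+\gamma)^{-1}$, which is the asserted identity.

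There is essentially no obstacle: the only points needing a word of care are that the interchange is licensed by nonnegativity (Tonelli) rather than by a dominated-integrability argument, and that the threshold $\lambda>e^{\theta-\gamma}$ which makes the final exponential integral converge is exactly the stated hypothesis — indeed the same computation shows the Laplace transform is $+\infty$ for $\lambda\le e^{\theta-\gamma}$. One could alternatively deduce the identity from the known Laplace transform of the Dickman subordinator via the representation \eqref{eq:Dick2}, but the direct computation above is shorter and self-contained, so that is the route I would take.
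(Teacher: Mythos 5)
Your proposal is correct and follows essentially the same route as the paper: substitute the integral representation \eqref{eq:Dick}, interchange the two integrations (the paper does so without comment; your invocation of Tonelli is a harmless extra justification), evaluate the inner integral as $\Gamma(s)\lambda^{-s}$, cancel against $\Gamma(s+1)=s\,\Gamma(s)$, and compute the resulting exponential integral under the hypothesis $\lambda>e^{\theta-\gamma}$. No further comment is needed.
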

\begin{proof}
Replacing formula \eqref{eq:Dick} into the Laplace integral and performing the integrations, we obtain:
\begin{align*}
        \int_0^\infty G_\theta(t) e^{-\lambda t} \,\dd t
        & = \int_0^\infty \int_0^\infty  \frac{e^{(\theta-\gamma) s} t^{s-1}}{\Gamma(s)} e^{-\lambda t} \,\dd s \,\dd t \\
        & = \int_0^\infty  \left( \int_0^\infty   t^{s-1} e^{-\lambda t} \dd t \right)
        \frac{e^{(\theta-\gamma) s} }{\Gamma(s)} \dd s \\
        & = \int_0^\infty  \left(  \frac{1}{\lambda^s}\int_0^\infty  t^{s-1} e^{-t}  \dd t \right)
        \frac{e^{(\theta-\gamma) s} }{\Gamma(s)} \dd s \\
        & = \int^\infty_0\frac{1}{\lambda^s} e^{(\theta-\gamma) s}   \dd s 
        = \int^\infty_0 e^{-(\log\lambda-\theta +\gamma)s} \dd s \\
        &= \frac{1}{\log\lambda-\theta + \gamma}.
\end{align*}
\end{proof}
We will also need the following asymptotics for $G_\theta$, which were established in \cite{CSZ19a}
 \begin{proposition}\label{asymptotic of G}
        For any $\gt\in\bbR$, the function $G_\gt(t)$ is continuous and strictly positive for $t\in(0,1]$. As $t\downarrow 0$ we have the asymptotic,
        $$
        G_\gt(t)=\frac{1}{t(\log\frac{1}{t})^2}\bigg\{ 1+\frac{2\gt}{\log\frac{1}{t}}
        +O\ra \frac{1}{(\log\frac{1}{t})^2} \rb \bigg\}.
        $$
    \end{proposition}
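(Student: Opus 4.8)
The plan is to reduce the whole statement to a single Laplace-type integral with a large parameter, and then to a Watson's lemma computation. Starting from the representation \eqref{eq:Dick}, multiply by $t$, write $t^{s-1}=t^s/t$ and $\tfrac{s}{\Gamma(s+1)}=\tfrac{1}{\Gamma(s)}$, and set $L:=\log\tfrac1t$ (so $L\to\infty$ as $t\downarrow0$). This gives
\[
t\,G_\theta(t)\;=\;\int_0^\infty \frac{e^{-(L-\theta+\gamma)s}}{\Gamma(s)}\,\dd s\;=\;\Phi(\mu),
\qquad\text{where }\ \Phi(\mu):=\int_0^\infty\frac{e^{-\mu s}}{\Gamma(s)}\,\dd s,\quad \mu:=L-\theta+\gamma .
\]
Thus everything follows once we have the large-$\mu$ expansion of the fixed integral $\Phi(\mu)$, after substituting $\mu=L-\theta+\gamma$ and re-expanding in powers of $1/L$.

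The continuity and strict positivity on $(0,1]$ are the easy parts and I would dispatch them first. From \eqref{eq:Dick2} the integrand $e^{\theta s}f_s(t)$ is strictly positive for every $s>0$ when $t\in(0,1]$ (there $f_s(t)=s\,t^{s-1}e^{-\gamma s}/\Gamma(s+1)>0$), which forces $G_\theta(t)>0$; and for continuity on any $[t_0,1]$ with $t_0>0$ one dominates the integrand in \eqref{eq:Dick} using $t^{s-1}\le \max(t_0^{s-1},1)$, the resulting bound being integrable (it is essentially $G_\theta(t_0)<\infty$), so dominated convergence applies.

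For the asymptotics of $\Phi(\mu)$: the function $1/\Gamma$ is entire with $1/\Gamma(0)=0$ and the classical expansion $1/\Gamma(s)=s+\gamma s^2+c_3s^3+\cdots$ near $s=0$ (this is $1/\Gamma(s+1)=1+\gamma s+O(s^2)$ multiplied by $s$), while on $[1,\infty)$ it is bounded and decays super-exponentially. I would split $\Phi(\mu)=\int_0^1+\int_1^\infty$; the tail contributes $O(e^{-\mu})$, and on $[0,1]$ I write $1/\Gamma(s)=s+\gamma s^2+r(s)$ with $|r(s)|\le C s^3$ for $s\in[0,1]$, then use $\int_0^\infty s^n e^{-\mu s}\,\dd s=n!/\mu^{n+1}$ together with the fact that extending the polynomial integrals from $[0,1]$ to $[0,\infty)$ costs only $O(e^{-\mu})$. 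This yields $\Phi(\mu)=\mu^{-2}+2\gamma\,\mu^{-3}+O(\mu^{-4})$. Finally substitute: since $1/\mu=L^{-1}+(\theta-\gamma)L^{-2}+O(L^{-3})$, one gets $\mu^{-2}=L^{-2}+2(\theta-\gamma)L^{-3}+O(L^{-4})$ and $2\gamma\mu^{-3}=2\gamma L^{-3}+O(L^{-4})$, so the $\gamma$ contributions cancel and
\[
t\,G_\theta(t)=L^{-2}+2\theta L^{-3}+O(L^{-4})=\frac{1}{(\log\frac1t)^2}\Big\{1+\frac{2\theta}{\log\frac1t}+O\big(\tfrac{1}{(\log\frac1t)^2}\big)\Big\},
\]
which is the claimed formula after dividing by $t$.

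The main obstacle is making the Watson's lemma step rigorous with exactly the stated relative error $O((\log\tfrac1t)^{-2})$: one needs uniform control of the remainder $r(s)$ on $[0,1]$ and of the (harmless) super-exponential tail, and then has to keep track of all error terms through the substitution $\mu\mapsto L$ so that they remain genuinely $O(L^{-4})$ in absolute size — this is what makes visible the cancellation of the $\gamma$ terms and produces the clean coefficient $2\theta$. Each individual estimate is routine; the care is entirely in bookkeeping the orders.
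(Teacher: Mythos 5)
The paper does not prove this proposition; it is quoted verbatim from [CSZ19a] (Proposition 1.6 there), so there is no in-paper argument to compare against. Your derivation is correct and complete in outline: the reduction $t\,G_\theta(t)=\int_0^\infty e^{-\mu s}/\Gamma(s)\,\dd s$ with $\mu=\log\frac1t-\theta+\gamma$ is exact, Watson's lemma applied to $1/\Gamma(s)=s+\gamma s^2+O(s^3)$ gives $\mu^{-2}+2\gamma\mu^{-3}+O(\mu^{-4})$, and the re-expansion in $L=\log\frac1t$ correctly produces the cancellation $2(\theta-\gamma)+2\gamma=2\theta$ yielding the stated coefficient; the positivity/continuity argument via the explicit positive integrand and domination is also fine. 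This is essentially the same route taken in the cited reference, so nothing further is needed.
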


We next move to the formulas for higher moments. These were obtained in \cite{CSZ19b} in the case of the third moment and
in \cite{GQT21} for arbitrary moments.
Here we will adopt the formulation presented in \cite{CSZ19b}.
Let us first write the alluded formula for the $h$-moment
and demystify it afterwards. The formula is:
 \begin{align}\label{mom-formula}
    &\bbE\Big[ \big( \mathscr{Z}_t^\theta(\varphi) \big)^h\Big] 
    = \sum_{m\ge 0}  \,\,\,\,\,\,\, (2\pi)^m \hskip -1.2cm\sumtwo{\{\{i_1,j_1\},...,\{i_m,j_m\} \in \{1,...,h\}^2}
    {\text{with $\{i_k,j_k\} \neq \{i_{k+1},j_{k+1}\}$ for $k=1,...,m-1$}}   \int_{(\R^2)^h} \dd \bx  \, \phi^{\otimes h}(\bx) \notag \\
   &  \iint_{\substack{0\le a_1< b_1<...< a_m < b_m\le t \\ x_1,y_1,...,x_m, y_m \in  \R^2 }}  
     g_{\f{a_1}{2}} (x_1-x^{i_1})  g_{\f{a_1}{2}}  (x_1-x^{j_1})  \,\,
   \prod_{r=1}^m    G_\gt(b_r-a_r) g_{\frac{b_r-a_r}{4}}(y_r-x_r)  \,\, \ind_{\cS_{i_r,j_r}} \notag \\
    &\,\,  \times \Big( \prod_{1\le r\le m-1} g_{\frac{a_{r+1}-b_{\sfp(i_{r+1})}}{2}}(x_{r+1} - y_{\sfp(i_{r+1}) }) \, 
    g_{\frac{a_{r+1}-b_{\sfp(j_{r+1})}}{2}} (x_{r+1}- y_{\sfp(j_{r+1})}) \Big) 
    \dd \vec\bx \,\dd \vec\by \,\dd \vec a \,\dd \vec b \,\,
    \end{align}
    where $\phi^{\otimes h}(\bx):=\phi(x^1)\cdots \phi(x^h)$, 
    $\cS_{i_r,j_r}$ is the event that Brownian motions $i_r$ and $j_r$, only, are involved in collisions in the time interval 
$(a_r,b_r)$ conditioned to both start at positions $x_r$ and ending at positions $y_r$ and for a pair $\{i_r,j_r\}$ we define
\begin{align*}
\sfp(i_r)&:= i_{\ell(r)} \quad\text{with} \quad
\ell(r):=\max\big\{ 0\leq \ell < r \colon \ind_{\cS_{i_\ell,j_\ell}}=1 \,\, \text{and} \,\, i_r\in \{i_\ell, j_\ell\} \big\}
\end{align*}
and similarly for $\sfp(j_r)$.
In other words, $\sfp(i_r)$ is the last time before $r$ that Brownian motion $B^{(i_r)}$ was involved in a collision. We note that if $\sfp(i_r)=0$ then $(b_{\sfp(i_r)},y_{\sfp(i_r)}):=(0,x^{i_r})$.
\vskip 2mm
A diagrammatic representation of formula \eqref{mom-formula} is shown in Figure \ref{fig:CK2}. 
To get a better idea of formula  \eqref{mom-formula} and its diagrammatic representation,   
we may use the Feynman-Kac formula \eqref{eq:FK} from which an easy computation
gives that
\begin{align}\label{momFK}
\bbE\Big[\Big( \int_{\R^2} \phi(x) u^\epsilon(t,x) \dd x \Big)^h \Big]
=\int_{(\R^2)^h} \phi^{\otimes h}(\bx) \, \bE_{\bx}^{\otimes h} \Big[ \Big( \beta_\epsilon^2 \sum_{1\leq i<j\leq h} \int_0^t J_\epsilon(B_s^{(i)} -B_s^{(j)})  \,\dd s\Big) \Big] \,\dd \bx
\end{align}
with $\bx=(x^1,...,x^h)$, $J_\epsilon(x):=\beta_\epsilon^2 \,\frac{1}{\epsilon^2} J\big(\frac{x}{\epsilon}\big)$,
with $J=j*j$ and $j$ as in \eqref{eq:FK}, 
approximates a delta function when $\epsilon\to0$. When $\beta_\epsilon$ is chosen
at the critical value \eqref{beta}, then the main contribution to \eqref{momFK}, in the limit $\epsilon\to0$
comes from configurations where the Brownian motions $B^{(1)},...,B^{(h)}$ have pairwise collisions. 
Expanding the exponential in \eqref{momFK} and breaking down according to when and where the collisions take place 
and which Brownian motions are involved, it gives rise to  formula \eqref{mom-formula} 
and its graphical representation as depicted in Figure \ref{fig:CK2}.  The wiggle lines appearing in that
Figure represent the weights accumulated from collisions of the Brownian motions and we often call it
{\bf replica overlap}.

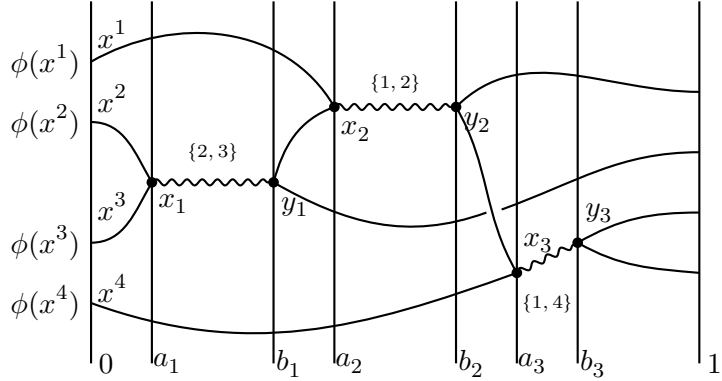
\begin{figure}
    \begin{tikzpicture}[scale=0.4]
        \foreach \i in {6,7,...,8}{
            \draw[-, thick] (2*\i,-6) -- (2*\i,6);
        }
        \draw[-,thick] (2,-6) -- (2,6);
        \draw[-,thick] (0,-6) -- (0,6);
        \draw[-,thick] (6,-6) -- (6,6);
        \draw[-,thick] (8,-6) -- (8,6);
        \draw[-,thick] (20,-6) -- (20,6);

        \fill[black] (2,0) circle [radius=0.175];
        \fill[black] (6,0) circle [radius=0.175];
        \fill[black] (8,2.5) circle [radius=0.175];
        \fill[black] (12,2.5) circle [radius=0.175];
        \fill[black] (14,-3) circle [radius=0.175];
        \fill[black] (16,-2) circle [radius=0.175];

        \draw[thick] (0,4) to [out=30,in=120] (8,2.5);
        \draw[thick] (0,2) to [out=0,in=120] (2,0);
        \draw[thick] (0,-2) to [out=0,in=-120] (2,0);
        \draw[thick] (0,-4) to [out=-20, in=200] (14,-3);
        \draw[thick] (6,0) to [out=75,in=200] (8,2.5); 
        \draw[thick] (6,0) to [out=-30,in=200] (13,-1);
        \draw[thick] (13.5,-0.9) to [out=20,in=180] (20,1);
        \draw[thick] (12,2.5) to [out=-60, in=120] (14,-3);
        \draw[thick] (12,2.5) to [out=45,in=180] (20,3);
        \draw[thick] (16,-2) to [out=30,in=180] (20,-1);
        \draw[thick] (16,-2) to [out=-30,in=175] (20,-3);

        \draw[-,thick,decorate,decoration={snake, amplitude=.4mm,segment length=2mm}] (2,0) -- (6,0);
        \draw[-,thick,decorate,decoration={snake, amplitude=.4mm,segment length=2mm}] (8,2.5) -- (12,2.5);
        \draw[-,thick,decorate,decoration={snake, amplitude=.4mm,segment length=2mm}] (14,-3) -- (16,-2);

        \node at (0.5,-6) {{$0$}};
        \node at (2.5,-6) {{$a_1$}};
        \node at (6.5,-6) {{$b_1$}};
        \node at (8.5,-6) {{$a_2$}};
        \node at (12.5,-6) {{$b_2$}};
        \node at (14.5,-6) {{$a_3$}};
        \node at (16.5,-6) {{$b_3$}};
        \node at (20.5,-6) {{$1$}};

        \node at (-1.5,4) {{$\phi(x^1)$}}; 
        \node at (-1.5,2) {{$\phi(x^2)$}};
        \node at (-1.5,-2) {{$\phi(x^3)$}};
        \node at (-1.5,-4) {{$\phi(x^4)$}};
        
        \node at (0.7,4.9) {{$x^1$}}; 
        \node at (0.7,2.8) {{$x^2$}};
        \node at (0.7,-0.8) {{$x^3$}};
        \node at (0.7,-3.5) {{$x^4$}};

        \node at (2.7,-0.7) {{$x_1$}};     
        \node at (6.7,-0.8) {{$y_1$}}; 
        \node at (8.7,1.7) {{$x_2$}};     
        \node at (12.7,2) {{$y_2$}};
        \node at (14.7,-2) {{$x_3$}};     
        \node at (16.7,-1) {{$y_3$}};

        \node at (4,1) {\tiny {$\{2,3\}$}};
        \node at (10,3.3) {\tiny {$\{1,2\}$}};
        \node at (15,-4) {\tiny {$\{1,4\}$}};
    \end{tikzpicture}
    \caption{This picture supplies a diagrammatic representation of the moment formula \eqref{mom-formula}, 
    more precisely of the term corresponding to $m=3$. The wiggle lines between points $(a_r,x_r)$ and 
    $(b_r,y_r)$ are given weight $ G_\gt(b_r-a_r) g_{\frac{b_r-a_r}{4}}(y_r-x_r)  $, representing the 
    total collision time of Brownian motions $B^{(i_r)} , B^{(j_r)}$ with a critically scaled attractive potential. 
    Pairs $\{i_r,j_r\}$ above wiggle lines indicate the indices of the pair of Brownian motions involved in the
    collisions.
    Solid lines between points $(a_r, x_r)$ and $(a_{\sfp(i_r)}, y_{\sfp(i_r)})$  are weighted by the heat kernel 
   $g_{\frac{a_{r}-b_{\sfp(j_{r})}}{2}} (x_{r}- y_{\sfp(j_{r})})$. 
    }
    \label{fig:CK2}
\end{figure}

\begin{figure}
    \begin{tikzpicture}[scale=0.4]
        \foreach \i in {6,7,...,8}{
            \draw[-, thick] (2*\i,-6) -- (2*\i,6);
        }
        \draw[-,thick] (-2,-6) -- (-2,6);
        \draw[dashed,thick] (0,-6) -- (0,6);
        \draw[-,thick] (2,-6) -- (2,6);
        \draw[-,thick] (6,-6) -- (6,6);
        \draw[-,thick] (8,-6) -- (8,6);
        \draw[-,thick] (20,-6) -- (20,6);

        \fill[black] (-2,0) circle [radius=0.175];
        \fill[black] (2,0) circle [radius=0.175];
        \fill[black] (6,0) circle [radius=0.175];
        \fill[black] (8,2.5) circle [radius=0.175];
        \fill[black] (12,2.5) circle [radius=0.175];
        \fill[black] (14,-3) circle [radius=0.175];
        \fill[black] (16,-2) circle [radius=0.175];

        \draw[thick] (-2,0) to [out=70,in=-150] (0,4);
            \draw[thick] (0,4) to [out=30,in=120] (8,2.5);
        \draw[thick] (-2,0) to [out=60,in=180] (0,2);
            \draw[thick] (0,2) to [out=0,in=120] (2,0);
        \draw[thick] (-2,0) to [out=-60,in=180] (0,-2);
            \draw[thick] (0,-2) to [out=0,in=-120] (2,0);
        \draw[thick] (-2,0) to [out=-70, in=160] (0,-4);
            \draw[thick] (0,-4) to [out=-20, in=200] (14,-3);
        \draw[thick] (6,0) to [out=75,in=200] (8,2.5); 
        \draw[thick] (6,0) to [out=-30,in=200] (13,-1);
        \draw[thick] (13.5,-0.9) to [out=20,in=180] (20,1);
        \draw[thick] (12,2.5) to [out=-60, in=120] (14,-3);
        \draw[thick] (12,2.5) to [out=45,in=180] (20,3);
        \draw[thick] (16,-2) to [out=30,in=180] (20,-1);
        \draw[thick] (16,-2) to [out=-30,in=175] (20,-3);

        \draw[-,thick,decorate,decoration={snake, amplitude=.4mm,segment length=2mm}] (2,0) -- (6,0);
        \draw[-,thick,decorate,decoration={snake, amplitude=.4mm,segment length=2mm}] (8,2.5) -- (12,2.5);
        \draw[-,thick,decorate,decoration={snake, amplitude=.4mm,segment length=2mm}] (14,-3) -- (16,-2);

        

        \node at (-1.5,-6) {{$0$}};
        \node at (0.5,-6) {{$\epsilon^2$}};
        \node at (2.5,-6) {{$a_1$}};
        \node at (6.5,-6) {{$b_1$}};
        \node at (8.5,-6) {{$a_2$}};
        \node at (12.5,-6) {{$b_2$}};
        \node at (14.5,-6) {{$a_3$}};
        \node at (16.5,-6) {{$b_3$}};
        \node at (20.5,-6) {{$1+\epsilon^2$}};

        \node at (0.7,4.9) {{$x^1$}}; 
        \node at (0.7,2.8) {{$x^2$}};
        \node at (0.7,-0.8) {{$x^3$}};
        \node at (0.7,-3.5) {{$x^4$}};

        \node at (2.7,-0.7) {{$x_1$}};     
        \node at (6.7,-0.8) {{$y_1$}}; 
        \node at (8.7,1.7) {{$x_2$}};     
        \node at (12.7,2) {{$y_2$}};
        \node at (14.7,-2) {{$x_3$}};     
        \node at (16.7,-1) {{$y_3$}};

        \node at (4,1) {\tiny {$\{2,3\}$}};
        \node at (10,3.3) {\tiny {$\{1,2\}$}};
        \node at (15,-4) {\tiny {$\{1,4\}$}};

        \node at (-3,0) {{$0^{\otimes h}$}};
    \end{tikzpicture}
    \caption{
    This figure shows a diagrammatic representation of formula \eqref{mom-formula-g}. The laces 
    and wiggle lines are assigned weights similarly to the assignments in Figure \ref{fig:CK2}.
    }\label{Fig2}
\end{figure}
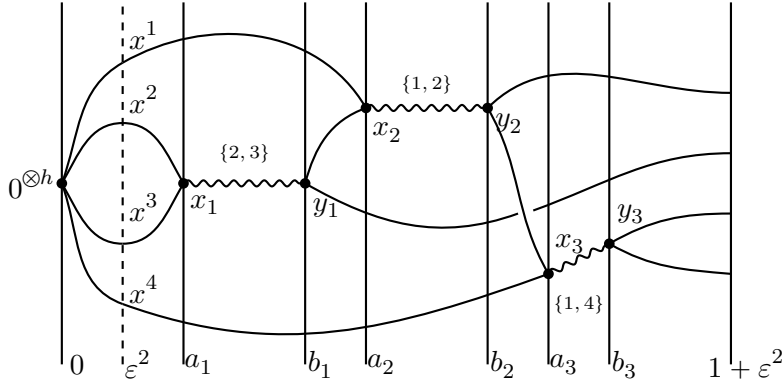
\vskip 2mm
 Our main objective, which will be carried in the next sections, is to determine the asymptotics of 
 \eqref{mom-formula} when the test function $\phi$ is $\cU_{B(0,\epsilon)}(\cdot) := \frac{1}{\pi \epsilon^2} \, \ind_{B(0,\epsilon)}(\cdot)$ .
However, it will be more convenient to work with $\phi$ being a heat kernel approximation of the delta function 
and look into the asymptotics of
 \begin{align}\label{def:Zscr}
 \mathfrak{M}_\epsilon^{\theta,h}
 := \bbE\Big[ \big( \mathscr{Z}_{1}^\theta(g_{\f{\epsilon^2}{2}}) \big)^h\Big] 
 \qquad \text{with}
 \qquad  g_{\f{\epsilon^2}{2}}(x)=\frac{1}{\pi\epsilon^2}e^{-\frac{|x|^2}{\epsilon^2}},
 \end{align}
 and then perform a comparison to $\bbE\Big[ \big( \mathscr{Z}_{1}^\theta( \cU_{B(0,\epsilon)}) \big)^h\Big]$.
For simplicity we just consider time $t=1$.

Let us write the series expression for  $\mathfrak{M}_\epsilon^{\theta,h}$. 
For every $i$, we integrate,
  $g_{\f{\epsilon^2}{2}}(x^i)$ against the heat kernel corresponding to the weight of the lace
  emanating from $x^i$ (see Figure \ref{fig:CK2}):
  \begin{align*}
  \int_{\R^2} g_{\f{\epsilon^2}{2}}(x^i) \, g_{\f{a_{\sfr(i)}}{2}}(x_{\sfr(i)}-x^i) \, \dd x^i 
  =g_{\f{a_{\sfr(i)}+\epsilon^2}{2}}(x_{\sfr(i)}),
  \end{align*}
  where we have denoted by $\sfr(i)$ the index which determines the point $(a_r,x_r), r=1,...,m$ 
  that is connected to $(0,x^i)$. Performing all such integrations over the initial points $x^i, i=1,...,h$
  and shifting the time variables $a_1,b_1,...,a_r,b_r$ by $\epsilon^2$, we arrive at the following formula,
  which is depicted in Figure \ref{Fig2}:
  \begin{align}\label{mom-formula-g}
    & \mathfrak{M}_\epsilon^{\theta,h} 
    = \sum_{m\ge 0}  \,\,\,\,\,\,\, (2\pi)^m \hskip -1.2cm\sumtwo{\{\{i_1,j_1\},...,\{i_m,j_m\} \in \{1,...,h\}^2}
    {\text{with $\{i_k,j_k\} \neq \{i_{k+1},j_{k+1}\}$ for $k=1,...,m-1$}}    \notag \\
   &  \iint_{\substack{\epsilon^2
   \le a_1< b_1<...< a_m < b_m\le 1+\epsilon^2 \\ x_1,y_1,...,x_m, y_m \in \R^2 }}  
    g_{\f{a_1}{2}} (x_1)^2  \,\,
   \prod_{r=1}^m    G_\gt(b_r-a_r) g_{\frac{b_r-a_r}{4}}(y_r-x_r)  \,\, \ind_{\cS_{i_r,j_r}}  \\
    &  \times \Big( \prod_{1\le r\le m-1} g_{\frac{a_{r+1}-b_{\sfp(i_{r+1})}}{2}}(x_{r+1} - y_{\sfp(i_{r+1}) }) \, 
    g_{\frac{a_{r+1}-b_{\sfp(j_{r+1})}}{2}} (x_{r+1}- y_{\sfp(j_{r+1})}) \Big)
     \,\, \dd \vec\bx \,\dd \vec\by \,\dd \vec a \,\dd \vec b \,\,\, ,\notag
    \end{align}
    We note that if $\sfp(i_r)=0$, then $(b_{\sfp(i_r)}, y_{\sfp(i_r)}) =(0,0)$.
\section{Upper bound}\label{sec:upper}
In this section we prove the upper bound in Theorem \ref{central}. The main estimate is contained in
the following proposition:
\begin{proposition}\label{thm: g upper bound}
Recall the definition of $\mathfrak{M}_\epsilon^{\theta,h}$ from \eqref{def:Zscr}.
For any $\gd>0$, $h\ge 2$ and $\gt\in\bbR$, then
    \begin{align}
        \mathfrak{M}_\epsilon^{\theta,h} \le \ra\log\f{1}{\epsilon}\rb^{ {h\choose 2} + o(1) }, \qquad \text{as $\epsilon\rightarrow 0$}.
    \end{align}
\end{proposition}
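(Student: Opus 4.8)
The plan is to estimate the series \eqref{mom-formula-g} for $\mathscr{Z}_\epsilon^{\theta,h}$ term by term in $m$ and then resum. First I would spatially integrate out all the position variables $x_r,y_r$ in each diagram. Since every heat kernel $g_t$ integrates to $1$ and the only constraint linking space and time is through the initial weight $g_{a_1/2}(x_1)^2 = \frac{1}{(2\pi a_1)^2} g_{a_1/4}(x_1) \cdot (\text{const})$ and the collision weights $g_{(b_r-a_r)/4}(y_r-x_r)$, the spatial integration should collapse to a product of these prefactors: $\int_{\R^2} g_{a_1/2}(x_1)^2\,\dd x_1 = \frac{1}{4\pi a_1}$, while each $\int g_{(b_r-a_r)/4}(y_r-x_r)\,\dd(\cdot)$ yields $1$, leaving only the $G_\theta(b_r-a_r)$ factors and the single $\tfrac{1}{a_1}$ factor, up to constants depending only on $h$. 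Thus I would reduce the $m$-th term to a purely temporal integral of the form
\begin{align*}
C^m \!\!\!\sum_{\substack{\{i_1,j_1\},\dots,\{i_m,j_m\}\\ \{i_k,j_k\}\neq\{i_{k+1},j_{k+1}\}}} \!\!\!
\iint_{\epsilon^2\le a_1<b_1<\cdots<b_m\le 1+\epsilon^2}\!\! \frac{1}{a_1}\prod_{r=1}^m G_\theta(b_r-a_r)\,
\prod_{r=1}^{m-1}\!\big(\text{gaps } a_{r+1}-b_{\sfp(\cdot)}\big)^{-1}\,\dd\vec a\,\dd\vec b,
\end{align*}
where the gap weights come from the $L^1$-normalisation constant $\tfrac{1}{2\pi t}$ of the connecting heat kernels. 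Using Proposition \ref{LaplaceG} (the Laplace transform $\int_0^\infty e^{-\lambda t}G_\theta(t)\,\dd t = (\log\lambda-\theta+\gamma)^{-1}$) together with Proposition \ref{asymptotic of G} ($G_\theta(t)\sim \tfrac{1}{t(\log 1/t)^2}$ near $0$), one sees that the factor responsible for the logarithmic growth is the renewal-type structure: a wiggle of small duration $b_r-a_r \approx \epsilon^2$ contributes an extra $\log\tfrac1\epsilon$ worth of mass when integrated against the short-time asymptotics.

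The core of the argument, following the strategy of \cite{CZ23} adapted to the critical regime, is to set up a recursion over the collision diagrams. I would introduce a Laplace multiplier $e^{-\lambda(b_m - a_1)}$ (or work with a resolvent kernel) to decouple the nested time-ordering, turning the iterated convolution of $G_\theta$'s and gap kernels into a product. Each pair-collision segment then contributes a factor of the form $\frac{1}{\log(1/\text{duration}) - \theta + \gamma + \cdots}$, and one tracks how the ``free'' logarithm $\log\tfrac1\epsilon$ gets distributed among the $m$ segments and $h$ walks. The combinatorial heart is that a diagram on $h$ labelled walks where consecutive pairs differ can accumulate at most one factor of $\log\tfrac1\epsilon$ per \emph{distinct pair that appears}, and there are $\binom{h}{2}$ possible pairs; the constraint $\{i_k,j_k\}\neq\{i_{k+1},j_{k+1}\}$ prevents a single pair from being iterated to produce more. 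This is where the exponent $\binom{h}{2}$ is born. I would make this precise by an induction on $h$: condition on the first time all $h$ walks have been ``tied together'', peel off the last walk to join the cluster, and bound the remaining diagram by the inductive hypothesis for $h-1$ walks, paying one extra $\log\tfrac1\epsilon$ for each of the $h-1$ new pairs $\{i,h\}$.

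The main obstacle will be controlling the sum over $m$ (the number of wiggles) uniformly: individual diagrams are summable, but the number of diagrams on $h$ walks of length $m$ grows like $\big(\binom{h}{2}\big)^m$, so one needs the per-segment Laplace factor to be strictly smaller than $1/\binom{h}{2}$ — which fails for segments of duration of order $1$ where $\log(1/\text{duration})$ is not large. The resolution, as hinted in the introduction (``introducing suitable Laplace multipliers, optimisation and specific combinatorics''), is to optimise the choice of the Laplace parameter $\lambda$ as a function of $\epsilon$: taking $\lambda$ growing slowly, e.g. like $(\log\tfrac1\epsilon)^{1/\log\log\log\tfrac1\epsilon}$, makes the geometric series converge at the cost of the stated $o(1)$ correction in the exponent, which is exactly the source of the extra factor $|\log\epsilon|^{1/|\log\log\log\epsilon|}$ mentioned after the theorem. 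I would therefore (i) split each time integral into a ``short'' regime near $\epsilon^2$ and a ``bulk'' regime, (ii) in the bulk use the crude bound $G_\theta(t)\le C$ and absorb everything into constants, (iii) in the short regime run the recursion with the optimised multiplier, and (iv) recombine, checking that the number of regime-splittings (bounded by $m$, hence by the slowly growing cutoff on $m$) only costs a further sub-polynomial-in-$\log$ factor. Finally, the passage from $\mathscr{Z}_\epsilon^{\theta,h}$ to $\bbE[(\mathscr{Z}_1^\theta(\cU_{B(0,\epsilon)}))^h]$ is a routine comparison of the Gaussian mollifier $g_{\epsilon^2/2}$ with the uniform ball density, which I would defer to a short separate lemma.
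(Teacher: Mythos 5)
Your overall skeleton (spatial integration to reduce to a temporal integral with $\tfrac{1}{a_1}$ and gap factors, Laplace multipliers to decouple the $G_\theta$-convolutions via Proposition \ref{LaplaceG}, a recursion in the spirit of \cite{CZ23}, a truncation of the sum over $m$, and an optimisation of the multiplier producing the $o(1)$ in the exponent) matches the paper's strategy. But the step where you claim the exponent ${h\choose 2}$ "is born" is wrong, and this is the heart of the proof. You assert that a diagram can accumulate at most one factor of $\log\tfrac1\epsilon$ per \emph{distinct pair that appears}, and that the constraint $\{i_k,j_k\}\neq\{i_{k+1},j_{k+1}\}$ "prevents a single pair from being iterated to produce more." That is false: the constraint only forbids \emph{consecutive} repetitions, so a single pair can recur arbitrarily many times in a diagram, and nothing in your accounting caps its contribution at one logarithm. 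The proposed induction on $h$ (peeling off the last walk to join the cluster) is not substantiated and does not obviously survive this objection.

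The actual mechanism is different. After integrating the replica variables, each wiggle contributes only a factor of order $f_\lambda(\epsilon^2)\approx\log\log\tfrac1\epsilon$ (Lemma \ref{eq: asymptotic of f 1}), not $\log\tfrac1\epsilon$; the time-ordering of the $m$ wiggles produces a $1/m!$ (tracked through the integration-by-parts recursion of Lemmas \ref{lemma:fF}--\ref{u2} and the coefficients $c^m_i$); and the number of admissible pair-sequences is ${h\choose 2}\big[{h\choose 2}-1\big]^{m-1}$. Summing the resulting series $\sum_m \big[{h\choose 2}-1\big]^{m}\frac{(\log\log\frac1\epsilon)^m}{m!}=(\log\tfrac1\epsilon)^{{h\choose 2}-1+o(1)}$ and multiplying by the single $\log\tfrac1\epsilon$ coming from $\int_{\epsilon^2}^2\frac{\dd u_1}{u_1}$ yields the exponent ${h\choose 2}$. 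Relatedly, your concern that the per-segment factor must be smaller than $1/{h\choose 2}$ to beat the $\big({h\choose 2}\big)^m$ diagram count misidentifies the convergence mechanism — it is the factorial, not a geometric decay, that controls the sum (the multipliers and the cutoff $k\sim C_0\log\log\tfrac1\epsilon$ only handle the tail $m\ge k$ and the error terms). Your choice $\lambda\sim(\log\tfrac1\epsilon)^{1/\log\log\log\frac1\epsilon}$ is also too large: the prefactor $e^{2\lambda}$ must remain $(\log\tfrac1\epsilon)^{o(1)}$, forcing $\lambda=o(\log\log\tfrac1\epsilon)$; the correct optimisation is $\lambda_\epsilon=\tfrac{\log\log\frac1\epsilon}{\log\log\log\frac1\epsilon}$, whose \emph{exponential} is the correction factor you quote.
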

Having the above estimate at hand we can deduce
 the upper bound in \eqref{thm:mom} as follows:
\begin{proof}[Proof of the upper bound in Theorem \ref{central}]
 We have the comparison: 
 \begin{align*}
     \cU_{B(0,\epsilon)}(\cdot) = \f{1}{\pi\epsilon^2} \ind_{B(0,\epsilon)}(\cdot)
    \le e g_{\epsilon^2/2}( \cdot ).
 \end{align*}
 Hence, by Proposition \ref{thm: g upper bound}, 
\begin{align*}
    \bbE \qa \ra \mathscr{Z}^\gt_1 \ra \cU_{B(0,\epsilon)} \rb \rb^h \qb
    \le e^h \mathfrak{M}_\epsilon^{\theta,h} 
    \le e^h \ra\log\f{1}{\epsilon}\rb^{ {h\choose 2}+o(1) }.
\end{align*} 
as $\epsilon\rightarrow 0$.
 \end{proof}
The rest of the section is devoted to the proof of Proposition  \ref{thm: g upper bound}.
As a warm up computation, we start with the following preliminary estimate on 
 $\mathfrak{M}_\epsilon^{\theta,h}$:
\begin{lemma}\label{lemma: sum over spatial variables}
    For $0<\epsilon<1$, the following estimate holds:
    \begin{align}\label{before replica}
    \mathfrak{M}_\epsilon^{\theta,h}
    & \le \sum_{m\ge 0} \mI_{m,h,\epsilon}
    \end{align}
    where $\mI_{0,h,\epsilon}=1$, $\mI_{1,h,\epsilon}=C{h\choose 2}\log\f{1}{\epsilon}$ for some $C>0$, and for $m\ge 2$:
    \begin{align}\label{eq: I1}
    \mI_{m,h,\epsilon}:=
    {h \choose 2} \left[ {h\choose 2} -1 \right]^{m-1}
    \hskip -.5cm
    \idotsint\limits_{ \sum_i(u_i+v_i)\le 1+\epsilon^2\, ,\, u_1 >\epsilon^2}   
    \hskip -.1cm  
     \frac{1}{u_1}   \prod _{1\le r\le m-1} \frac{G_\gt(v_r)}{\frac{1}{2}(v_r+u_r)+ u_{r+1}} 
     \, G_\gt(v_m) \,\dd \vec u \, \dd \vec v
    \end{align}
\end{lemma}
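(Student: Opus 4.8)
The plan is to start from the moment formula \eqref{mom-formula-g} and bound it term by term in $m$, first carrying out all the spatial integrations explicitly, reducing each diagram to an integral over the time variables alone. First I would pass to the time-gap variables: set $u_r := a_r - b_{\mathsf{r-1}}$ to be the ``free'' time spent between the $(r-1)$-th wiggle and the $r$-th wiggle (i.e.\ the total time along the solid laces entering the $r$-th collision vertex, suitably reparametrised), and $v_r := b_r - a_r$ the length of the $r$-th wiggle; the constraint $\epsilon^2 \le a_1 < b_1 < \cdots < b_m \le 1+\epsilon^2$ becomes $\sum_r (u_r + v_r) \le 1$ together with $u_1 > \epsilon^2$ coming from the initial shift by $\epsilon^2$. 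The key input for doing the $\bx,\by$ integrations is the Chapman–Kolmogorov/convolution identity for heat kernels: each pair of solid laces between a collision vertex $r$ and the vertex $\mathsf p(\cdot)$ feeding into it contributes, after integrating out the intermediate spatial points, a single heat kernel evaluated at a summed time, and the combination $g_{\frac{b_r-a_r}{4}}(y_r - x_r)\,\ind_{\cS_{i_r,j_r}}$ integrates (over $y_r$, and using that $\int \ind_{\cS}$ gives at most $1$ and the heat kernel integrates to $1$) to leave only the scalar weight $G_\theta(v_r)$ times the appropriate convolution factor. Tracking the time arguments, one finds that the heat-kernel factor joining consecutive collision vertices, once integrated over space, evaluates at $0$ with time parameter $\tfrac12(v_r + u_r) + u_{r+1}$ (the Brownian motions meeting at the $r$-th wiggle separately diffuse for the relevant gaps and must re-collide), producing the denominator $\tfrac12(v_r+u_r) + u_{r+1}$ in \eqref{eq: I1}. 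The leading factor $g_{a_1/2}(x_1)^2$ integrates over $x_1$ to a constant times $1/a_1 = 1/u_1$ (the square of a heat kernel at a point integrated in that point gives $\frac{1}{4\pi\cdot a_1/2}\cdot$const$\,=\,$const$/u_1$), which is the source of the $1/u_1$ weight; the overall $(2\pi)^m$ and the constants from $g_t(0) = \frac{1}{2\pi t}$ get absorbed into the displayed constant $C$ and into the clean form of the integrand.

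Next I would handle the combinatorial sum over the pair sequences $\{\{i_1,j_1\},\dots,\{i_m,j_m\}\}$. Since each $\{i_k,j_k\}$ ranges over the $\binom{h}{2}$ unordered pairs with the only restriction being $\{i_k,j_k\}\ne\{i_{k+1},j_{k+1}\}$, the number of admissible sequences is exactly $\binom{h}{2}\bigl(\binom{h}{2}-1\bigr)^{m-1}$; this is a crude overcount because the spatial integration we just performed implicitly depends on the combinatorial type of the diagram (through which vertex $\mathsf p(i_r)$ points to), but we bound every resulting time integral by the same majorising integral in which every denominator is $\tfrac12(v_r+u_r)+u_{r+1}$ — this is legitimate because, regardless of the diagram topology, the time argument of the reconnecting heat kernel is always at least $u_{r+1}$ plus (half of) the lengths accumulated since the last collision of the relevant walk, and $\tfrac12(v_r+u_r)$ is a valid lower bound for that accumulated amount (one must check that the topology only ever makes this argument larger, never smaller, so that $1/(\cdot)$ is correspondingly smaller). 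Pulling the combinatorial count out of the integral and relabelling gives precisely the product structure in \eqref{eq: I1}, with the understanding that the $G_\theta(v_m)$ at the end has no reconnecting denominator after it. The cases $m=0$ and $m=1$ are immediate: $m=0$ gives $\mathbb{E}[\mathscr{Z}_1^\theta(g_{\epsilon^2/2})] = \tfrac12\int g_{(1+\epsilon^2)/2}(x)\,\mathrm dx \cdot \text{(normalisation)} \le 1$ after accounting for the $h$-th power being a single term, i.e.\ $\mathcal I_{0,h,\epsilon}=1$; and $m=1$ is the two-point contribution, which by Proposition \ref{LaplaceG} and Proposition \ref{asymptotic of G} — or directly by the known asymptotic \eqref{2mom-as} — contributes $C\binom{h}{2}\log\tfrac1\epsilon$, the $\binom h2$ being the number of pairs that can collide once.

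The main obstacle, and the step I would spend the most care on, is justifying the majorisation of the diagram-dependent spatial integral by the single clean integrand uniformly over all $\binom{h}{2}(\binom h2-1)^{m-1}$ combinatorial types. Concretely: after integrating out all spatial variables, each diagram yields a product $\prod_r G_\theta(v_r)\cdot\prod_{r} g_{\tau_r}(0)$ where $\tau_r$ is a sum of (halves of) certain $u$'s and $v$'s determined by the lace structure; one must verify that $\tau_r \ge \tfrac12(v_{r-1}+u_{r-1}) + u_r$ in every diagram, or rather find the correct universal lower bound, so that $g_{\tau_r}(0)=\tfrac{1}{2\pi\tau_r}$ is dominated by $\tfrac{1}{2\pi}\cdot\tfrac{1}{\tfrac12(v_{r-1}+u_{r-1})+u_r}$. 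The subtlety is that when a walk $i_{r}$ was last involved in a collision at a vertex $\mathsf p(i_r)$ that is several steps back (because walk $i_r$ sat idle while other pairs collided), the time argument is a longer sum and the bound is easy; the tight case is when $\mathsf p(i_r)=r-1$, and here one must confirm the Chapman–Kolmogorov bookkeeping gives exactly $\tfrac12(v_{r-1}+u_{r-1})+u_r$ (the two walks that just collided at time $b_{r-1}$, conditioned on being at nearby points, each diffuse and the relevant kernel for them to re-meet at the next collision has that time parameter). I would set up a small lemma isolating this one-vertex-to-next-vertex computation — integrating $\int g_{s/4}(y-x)\,\ind_{\cS}(x\to y)\,g_{t_1/2}(z_1-y)\,g_{t_2/2}(z_2-y)\,\mathrm dy$ and the analogous step at the receiving vertex — and then iterate it along the diagram, the telescoping producing \eqref{eq: I1}. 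Once that uniform domination is in place, summing over the combinatorial types just multiplies by the stated binomial count and the lemma follows.
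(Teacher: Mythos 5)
Your proposal is correct and follows essentially the same route as the paper: integrate out the spatial variables vertex by vertex via Chapman--Kolmogorov, bound the resulting heat kernel at its maximum $g_\tau(0)=\tfrac{1}{2\pi\tau}$ where $\tau = a_{r}-\tfrac12\big(b_{\sfp(i_{r})}+b_{\sfp(j_{r})}\big) \ge u_{r}+\tfrac12(v_{r-1}+u_{r-1})$, and pull the combinatorial count ${h\choose 2}\big[{h\choose 2}-1\big]^{m-1}$ out of a single majorising time integral. One small correction to your description of the tight case: the two walks meeting at vertex $r$ cannot both have had their previous collision at vertex $r-1$ (that would force $\{i_r,j_r\}=\{i_{r-1},j_{r-1}\}$, which the sum excludes), so the extremal configuration is one predecessor at $r-1$ and one at $r-2$ --- and it is exactly this exclusion that produces the term $\tfrac12(v_{r-1}+u_{r-1})$ in the denominator rather than the weaker bound $u_{r}$ alone.
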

\color{black}
\begin{proof}
We work with \eqref{mom-formula-g}. 
The $m=0$ term in that formula is simply $1$. The $m=1$ term is equal to:
\begin{align}\label{m1term}
    2\pi \sum_{i,j\in\{1,...,h\}} \iint_{\stackrel{\epsilon^2\le a < b\le 1+\epsilon^2}{x,y\in\bbR^2}}  \quad g_{\f{a}{2}}(x)^2 G_\gt(b-a) g_{\f{b-a}{4}}(y-x) \, \dd x \, \dd y \, \dd a \, \dd b.
\end{align}
To simplify notations, we extend the integral $\iint_{\epsilon^2\le a<b\le 1+\epsilon^2}(...) \dd a \dd b$ to $\iint_{\epsilon^2\le a<b\le 2}(...) \dd a \dd b$. We first perform the integration over $y$, which gives
\begin{align*}
    \int_{\bbR^2} g_{\f{b-a}{4}}(y-x) \dd y=1.
\end{align*}
Then by Proposition \ref{asymptotic of G}, we integrate over $b$:
\begin{align*}
    \int_{a}^2 G_\gt(b-a) \dd b \le C.
\end{align*}
Therefore we bound \eqref{m1term} by:
\begin{align*}
    C \sum_{i,j\in\{1,...,h\}} \iint_{\epsilon^2\le a\le 2 \, , x\in\bbR^2} \quad g_{\f{a}{2}}(x)^2 \, \dd x \dd a
    & =  C {h\choose 2} \iint_{\epsilon^2\le a\le 2 \,, x\in\bbR^2} g_{\f{a}{2}}(x)^2 \, \dd x \dd a \\
    & = C {h\choose 2} \int_{\epsilon^2}^{ 2} g_a(0) \dd a\\
    & = C {h\choose 2} \log (\f{2}{\epsilon^2}) \le C {h\choose 2}\log(\f{1}{\epsilon}).
\end{align*}
Now we treat the case $m\ge 2$. We will follow the convention that $b_0=0$. and recall the convention that if $\sfp(i_r)=0$, then
$(b_{\sfp(i_r)}, y_{\sfp(i_r)}) =(0,0)$. 
We start by performing the integration over $y_m$, which amounts to
\begin{align*}
\int_{\R^2} g_{\frac{b_m-a_m}{4}}(y_m-x_m) \,\dd y_m=1.
\end{align*}
Next we integrate $x_m$. 
\begin{align*}
    & \int_{\bbR^2}
    g_{\frac{a_{m}-b_{\sfp(i_{m})}}{2}} \big(x_{m}- y_{\sfp(i_{m})} \big) \,
    g_{\frac{a_{m}-b_{\sfp(j_{m})}}{2}} \big(x_{m}- y_{\sfp(j_{m})} \big) \, \dd x_m\\
    & = g_{a_m-\frac{1}{2}(b_{\sfp(i_{m})} +b_{\sfp(j_{m})})} 
    \big(y_{\sfp(i_{m})} -y_{\sfp(j_m)} \big) \le 
    \frac{1/\pi}{2a_m-( b_{\sfp(i_{m})}\!+b_{\sfp(j_{m})} )}
    \le \f{1/\pi}{(a_m-b_{m-1})+(a_m-b_{m-2})},
\end{align*}
as $b_{\sfp(i_{m})}$ and $b_{\sfp(j_{m})}$ may be before $b_{m-1}$ and $b_{m-2}$, respectively, but not
after and they cannot be both equal to  just one of $b_{m-1}$ or $b_{m-2}$.

The result then follows by iterating the same integration successively over 
$y_{m-1}, x_{m-1},...,y_1, x_1$ and changing variables as
\begin{align*}
v_i:= b_i-a_i \qquad \text{and} \qquad u_i:= a_i-b_{i-1}.
\end{align*}
The combinatorial factor $ {h \choose 2} \left[ {h\choose 2} -1 \right]^{m-1}$ counts the choices of 
assigning pairs $\{i,j\}$ to the wiggle lines, noting that two consecutive wiggle lines will need to 
have different pairs assigned to them.
\end{proof}
We will next bound \eqref{eq: I1}. The first step is to introduce multipliers and integrate
over the $v_1,...,v_r$ variables to obtain the following intermediate estimate:

\begin{lemma}[Integration of the replica variables]\label{technical1}
There exists a constant $C>0$ such that for all $\lambda>e^{\theta-\gamma}$, it holds
\begin{equation}\label{eq: integration of the replica variables}
    \mathfrak{M}_\epsilon^{\theta,h} \le 
    C e^{2\gl} \sum_{m=0}^\infty  \mI^{(\lambda)}_{m,h,\epsilon}
\end{equation}
where $\mI^{(\lambda)}_{m,h,\epsilon}:=\mI_{m,h,\epsilon}$ for $m=0,1$, and for $m\ge 2$:
\begin{equation}\label{eq: I2}
    \mI^{(\lambda)}_{m,h,\epsilon} := {h \choose 2} \left[ {h\choose 2} -1 \right]^{m-1}
    \idotsint\limits_{\sum_i u_i \le 2\,,\, u_1>\epsilon^2}
    \frac{1}{u_1}
    \prod^m_{r=2}F_\gl \big(u_r+ \frac{u_{r-1}}{2} \big) \,\, \dd\vec u , 
\end{equation}
with
\begin{equation}\label{G}
F_\gl(w):=
\int^\infty_0\frac{ e^{- \gs w} \, \dd \sigma}{\log(\gl+\gs/2)-\theta+\gamma}.
\end{equation}

\end{lemma}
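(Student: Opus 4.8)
The plan is to start from the bound \eqref{before replica} of Lemma \ref{lemma: sum over spatial variables} and manipulate the $v_r$-integrals in \eqref{eq: I1} using an exponential (Laplace) multiplier. The key observation is that in \eqref{eq: I1} the replica variables $v_1,\dots,v_{m}$ appear in two places: inside $G_\theta(v_r)$, and inside the denominators $\tfrac12(v_r+u_r)+u_{r+1}$. To decouple them I would write each such denominator as a Laplace integral, namely $\tfrac{1}{\frac12(v_r+u_r)+u_{r+1}} = \int_0^\infty e^{-\sigma_r(\frac12(v_r+u_r)+u_{r+1})}\,\dd\sigma_r$, for $r=1,\dots,m-1$. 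After inserting these identities and using the constraint $\sum_i(u_i+v_i)\le 1+\epsilon^2$ to bound the leftover factors (in particular $G_\theta(v_m)$ integrates over $v_m\in(0,2)$ to a constant by Proposition \ref{asymptotic of G}, giving one of the constants $C$), the integral over each $v_r$ is $\int_0^\infty e^{-(\sigma_r/2)v_r} G_\theta(v_r)\,\dd v_r$, which by Proposition \ref{LaplaceG} equals $\tfrac{1}{\log(\sigma_r/2)-\theta+\gamma}$ provided $\sigma_r/2>e^{\theta-\gamma}$.

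To make the application of Proposition \ref{LaplaceG} legitimate for all $\sigma_r>0$ (not only large ones) I would shift the Laplace variables: instead of the bare denominator I use $\tfrac{1}{\frac12(v_r+u_r)+u_{r+1}} \le e^{\lambda(\frac12(v_r+u_r)+u_{r+1})}\int_0^\infty e^{-(\lambda+\sigma_r)(\frac12(v_r+u_r)+u_{r+1})}\,\dd\sigma_r$ — or more cleanly, bound $\tfrac{1}{w}\le e^{\lambda w}\int_\lambda^\infty e^{-\mu w}\dd\mu$ so that the resulting Laplace parameter is at least $\lambda>e^{\theta-\gamma}$, and the factors $e^{\lambda(\frac12 v_r + \frac12 u_r + u_{r+1})}$ multiply out. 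Since the total time is at most $1+\epsilon^2\le 2$, the product of all the exponential correction factors $e^{\lambda\cdot(\text{sum of arguments})}$ is bounded by $e^{2\lambda}$ times a possible extra $e^{2\lambda}$ from reorganizing, which is where the prefactor $Ce^{2\lambda}$ in \eqref{eq: integration of the replica variables} comes from. Carrying out the $v_r$ integration then produces exactly the factor $\int_0^\infty \tfrac{e^{-\sigma_r(u_r/2+u_{r+1})}\,\dd\sigma_r}{\log(\lambda+\sigma_r/2)-\theta+\gamma}$; relabelling $r\mapsto r$ appropriately and matching indices to $F_\lambda(u_r+u_{r-1}/2)$ as in \eqref{G}–\eqref{eq: I2} gives the stated bound, with the surviving $G_\theta(v_m)$ integral and the $m=1$ computation already handled inside Lemma \ref{lemma: sum over spatial variables}. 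The combinatorial prefactor ${h\choose2}[{h\choose2}-1]^{m-1}$ is unchanged since we only manipulated the spatial/temporal integrand.

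I expect the main technical point to be bookkeeping the shift by $\lambda$ and verifying that the total exponential overhead is $O(e^{2\lambda})$ uniformly in $m$: one must check that the arguments $\tfrac12(v_r+u_r)+u_{r+1}$ summed over $r$ telescope into something controlled by $\sum_i(u_i+v_i)\le 2$ (each $u_i$ and each $v_i$ appears a bounded number — in fact at most twice — of times), so that $\prod_r e^{\lambda(\frac12(v_r+u_r)+u_{r+1})}\le e^{2\lambda\sum_i(u_i+v_i)}\le e^{4\lambda}$, say, which is absorbed into $Ce^{2\lambda}$ after a harmless constant adjustment. A secondary point is the endpoint condition $\lambda>e^{\theta-\gamma}$, needed so that $\log(\lambda+\sigma_r/2)-\theta+\gamma>0$ for every $\sigma_r\ge 0$, making $F_\lambda$ well-defined and positive; this is exactly the hypothesis stated in Lemma \ref{technical1}. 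Everything else — integrating out $v_m$, the $m=0,1$ cases — is either trivial or already done, so no further estimates are required at this stage.
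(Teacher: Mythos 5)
Your strategy is in essence the paper's: represent each denominator $\frac{1}{\frac12(v_r+u_r)+u_{r+1}}$ as a Laplace integral, integrate out each $v_r$ against $G_\gt$ via Proposition \ref{LaplaceG}, and introduce a $\gl$-tilt so that the proposition applies near $\gs_r=0$. The problem is where you place the tilt. You attach a factor $e^{\gl w_r}$ to each denominator, with $w_r=\frac12(v_r+u_r)+u_{r+1}$; the product of these factors is $e^{\gl\sum_r w_r}$, and since each $u_i$ enters $\sum_r w_r$ with total weight at most $3/2$ this is only bounded by $e^{3\gl}$ (you write $e^{4\gl}$). That overhead cannot be ``absorbed into $Ce^{2\gl}$ after a harmless constant adjustment'': no $\gl$-independent $C$ satisfies $e^{3\gl}\le Ce^{2\gl}$ for all admissible $\gl$, and the point is not cosmetic because in the final optimisation one takes $\gl=\gl_\epsilon\to\infty$. (The end result, Proposition \ref{thm: g upper bound}, would still survive with $e^{4\gl}$ in place of $e^{2\gl}$, but the Lemma as stated is not what your argument proves.) A second, smaller mismatch: once $e^{\gl w_r}$ is pulled out, the surviving $v_r$-exponent is $e^{-(\gl+\gs_r)v_r/2}$, so Proposition \ref{LaplaceG} is invoked with parameter $\tfrac{\gl+\gs_r}{2}$ rather than $\gl+\gs_r/2$; you obtain $\int_0^\infty e^{-\gs w}\big(\log\tfrac{\gl+\gs}{2}-\gt+\gamma\big)^{-1}\dd\gs$ instead of $F_\gl(w)$ exactly as defined in \eqref{G}, and you need $\gl>2e^{\gt-\gamma}$ rather than $\gl>e^{\gt-\gamma}$. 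These are repairable, but your claim that the $v_r$-integration ``produces exactly'' $F_\gl$ is not correct as written.

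The paper's placement of the tilt avoids both issues: on the extended domain $\sum_i(u_i+v_i)\le 2$ one multiplies the integrand of \eqref{eq: I1} once and for all by $e^{2\gl}e^{-\gl\sum_i v_i}\ge 1$. This costs exactly $e^{2\gl}$, attaches $e^{-\gl v_r}$ to each $G_\gt(v_r)$, and then the bare representation $\frac{1}{v_r/2+w}=\int_0^\infty e^{-\gs(v_r/2+w)}\,\dd\gs$ with $w=u_r/2+u_{r+1}$ yields $\int_0^\infty e^{-(\gl+\gs/2)v_r}G_\gt(v_r)\dd v_r$, i.e.\ precisely $F_\gl(w)$ under the stated hypothesis $\gl>e^{\gt-\gamma}$. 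The rest of your outline (the $v_m$-integral, the $m=0,1$ cases, the unchanged combinatorial prefactor) agrees with the paper.
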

\begin{proof}
For $m<2$ there is nothing to prove. For $m\ge 2$, to simplify notationally, we extend the integral in \eqref{before replica} to $\sum_{i}(u_i+v_i)<2$.
We next introduce the multipliers. 
To this end, we consider a parameter $\gl>0$, which will be suitably chosen later on
 and we multiply \eqref{before replica} by $e^{2\gl} e^{-\gl\sum_i v_i} \geq 1$
to obtain the bound
\begin{equation}
\begin{split}
    \mI_{m,h,\epsilon}
    & \le e^{2\lambda} 
    {h \choose 2} \left[ {h\choose 2} -1 \right]^{m-1} \hskip -.5cm
    \idotsint\limits_{\sum_i(u_i+v_i)\le 2\,,\, u_1>\epsilon^2}  
    \hskip -.5cm  \frac{1}{u_1}
    \prod _{1\le r\le m-1} \frac{e^{-\gl v_r} \, G_\gt(v_r)}{\frac{1}{2}(v_r+u_r)+u_{r+1}}  \,
     G_\gt(v_m) \,\, \dd\vec u \dd\vec v .
\end{split}
\end{equation}
Next we integrate all the $v$ variables. Starting from $v_m$ we  use the bound
\begin{equation}\label{a}
        \int_0^2 G_\gt(v_m)e^{-\gl v_m} \dd v_m
        \le    \int_0^2 G_\gt(v_m) \dd v_m \le C
    \end{equation}
which follows from Proposition \ref{asymptotic of G}. For the rest of the $v$-variables we use 
that for any $w>0$ we have the bound:
    \begin{align}\label{GG-Lapl}
        \int_0^2 
        \frac{e^{-\gl v} \, G_\gt(v)}{v/2+w}  \,\dd v     
        & =  \int^2_0   \int^\infty_0e^{-\gs(v/2+w)} e^{-\gl v}G_{\gt}(v) \, \dd \gs \, \dd v \notag\\
        & \le 
        \int^\infty_0 \dd\gs \, e^{- \gs w}
        \int^\infty_0 e^{-(\gl+\gs/2) v} \,G_{\gt}(v)  \dd v  \notag \\
        & = \int^\infty_0  \frac{ e^{- \gs w} \, \dd\gs}{\log(\gl+\gs/2)-\gt+\gamma}, 
    \end{align}
where in the last step we used Proposition \ref{LaplaceG}. For the last formula to be valid we need,
according to Proposition \ref{LaplaceG}, to choose $\lambda> e^{\theta-\gamma}$.
To conclude,  we choose $w:=u_{r}+\frac{1}{2}u_{r-1}$ and insert successively for $r=2,...,m$.
\end{proof}

The next step is to integrate over the $u$ variables in \eqref{eq: I2}. Our approach here is
inspired by  \cite{CZ23}. 
However, some details are rather different as we make use of the multiplier $\lambda$ and
we also take into account the critical nature of the Critical 2d SHF. 

To start with we define:
\begin{equation}\label{eq: def of f}
    f_\gl(w):= \int^{2}_w F_\gl(v) \dd v = \int^\infty_0 
     \frac{1}{\gs} \frac{e^{-\gs w}-e^{-2\gs}}{\log(\gl+\gs/2)-(\theta-\gamma)}  \, \dd\gs .
    \end{equation}
Note that $f'_\lambda=-F_\lambda\leq 0$ on $(0,2]$, as $F$ is non-negative, thus, $f$ is non-increasing.
We have the following Lemma:
\begin{lemma}\label{lemma:fF}
    There exists $C>0$ such that for all $\lambda> \ra e^{2(\theta-\gamma)} \vee 1 \rb$
    and $w\in (0,1)$ we have:
    \begin{align}\label{fF:eq1}
        \int_0^{2}  F_\gl(u+w)  f_\gl(u)^j \,\dd u
     \le \sum^{j+1}_{\ell=0}
       \frac{j!}{(j+1-\ell)!} \Big(\frac{4}{\log\lambda} \Big)^\ell
        f_\gl( 2 w)^{j+1-\ell}.
    \end{align}
\end{lemma}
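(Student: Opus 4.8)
The plan is to establish the recursive inequality \eqref{fF:eq1} by exploiting the monotonicity of $f_\lambda$ together with the Laplace-transform structure of $F_\lambda$, reducing the integral over $u$ to an expression that can be re-expressed in terms of $f_\lambda$ evaluated at the shifted argument $2w$. First I would split the integration region $(0,2]$ into the two pieces $\{u \le w\}$ and $\{u > w\}$. On the region $\{u \le w\}$, the argument $u+w$ of $F_\lambda$ is at most $2w$, and on the region $\{u>w\}$ the argument is between $2w$ and $w+2$; since $F_\lambda$ is non-increasing (it is a Laplace transform of a nonnegative measure, so $F_\lambda'\le 0$), we can bound $F_\lambda(u+w)$ from above by a quantity depending only on $2w$ plus a controlled correction. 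Simultaneously, on $\{u\le w\}$ we have $f_\lambda(u) \le f_\lambda(0)$, and on $\{u>w\}$ we have $f_\lambda(u) \le f_\lambda(w)$, etc.

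**Key manipulations.**

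The cleaner route, which I expect the authors follow, is to integrate by parts. Since $f_\lambda' = -F_\lambda$, we have $F_\lambda(u+w)\,\dd u = -\dd\big(f_\lambda(u+w)\big)$, so
\[
\int_0^{2} F_\lambda(u+w)\, f_\lambda(u)^j\,\dd u
= \Big[-f_\lambda(u+w) f_\lambda(u)^j\Big]_0^2 - j\int_0^2 f_\lambda(u+w)\, f_\lambda(u)^{j-1} F_\lambda(u)\,\dd u.
\]
The boundary term at $u=0$ gives $f_\lambda(w)f_\lambda(0)^j$, which must be controlled by $f_\lambda(2w)^{j+1}$ up to lower-order terms; the boundary term at $u=2$ vanishes since $f_\lambda(2)=0$. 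The key quantitative input is that $f_\lambda(0) - f_\lambda(2w)$ is small — of order $\tfrac{1}{\log\lambda}$ uniformly in $w\in(0,1)$ — because for large $\lambda$ the denominator $\log(\lambda+\sigma/2)-(\theta-\gamma)$ is at least of order $\log\lambda$, and $\int_0^\infty \frac{1}{\sigma}(e^{-\sigma\cdot 0}-e^{-2\sigma w}-e^{-2\sigma}+e^{-2\sigma})\,\dd\sigma = \log\frac{1}{w}$ type expression stays bounded when divided by $\log\lambda$. One then iterates: writing $f_\lambda(0)^j = (f_\lambda(2w) + (f_\lambda(0)-f_\lambda(2w)))^j$ and expanding binomially, each factor of $(f_\lambda(0)-f_\lambda(2w))$ contributes a factor bounded by $\tfrac{4}{\log\lambda}$ (after absorbing constants), which accounts for the $\big(\tfrac{4}{\log\lambda}\big)^\ell$ factors and the combinatorial coefficients $\tfrac{j!}{(j+1-\ell)!}$ in \eqref{fF:eq1}; the residual integral term $\int_0^2 f_\lambda(u+w)f_\lambda(u)^{j-1}F_\lambda(u)\,\dd u$ is handled by the same inductive hypothesis on $j-1$.

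**The main obstacle.**

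The delicate point — and the step I expect to be the crux — is obtaining the uniform-in-$w$ bound $f_\lambda(0)-f_\lambda(w) \le \tfrac{C}{\log\lambda}$ (or more precisely controlling $\int_0^w F_\lambda(v)\,\dd v$ by $\tfrac{C}{\log\lambda}$ up to a mild $\log\tfrac1w$ factor that gets absorbed), since $F_\lambda(v)$ behaves like $\tfrac{1}{v\log\lambda}$ for small $v$ and this is exactly the logarithmic singularity that reflects the criticality. One must check that the constant $4$ in \eqref{fF:eq1} is genuinely achievable: this requires being somewhat careful with the estimate $\log(\lambda+\sigma/2)-(\theta-\gamma) \ge \log\lambda - (\theta-\gamma) \ge \tfrac12\log\lambda$ for $\lambda \ge e^{2(\theta-\gamma)}\vee 1$, together with $\int_0^\infty \tfrac1\sigma(e^{-\sigma w}-e^{-2\sigma})\,\dd\sigma = \log\tfrac{2}{w} \le \log\tfrac{2}{w}$, and then matching this against $f_\lambda(2w)$ which has a comparable $\log$ in the numerator — so the ratio is controlled by a universal constant times $\tfrac{1}{\log\lambda}$, and tracking constants gives the factor $4$. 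The binomial bookkeeping to land exactly on $\sum_{\ell=0}^{j+1}\tfrac{j!}{(j+1-\ell)!}\big(\tfrac{4}{\log\lambda}\big)^\ell f_\lambda(2w)^{j+1-\ell}$ — rather than something merely of the same order — is routine once the base estimate is in hand, proceeding by induction on $j$ with the $j=0$ case being the integration-by-parts identity above combined with the base estimate.
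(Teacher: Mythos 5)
There is a genuine gap, and it sits exactly at the point you flag as ``the main obstacle.'' Your integration by parts in the direction $F_\gl(u+w)\,\dd u=-\dd\big(f_\gl(u+w)\big)$ produces the boundary term $f_\gl(w)\,f_\gl(0)^j$ at $u=0$, and $f_\gl(0^+)=+\infty$. Indeed, restricting the $\gs$-integral defining $F_\gl(v)$ to $\gs\in[\tfrac{1}{2v},\tfrac{1}{v}]$ gives $F_\gl(v)\ge \frac{c}{v\log(1/v)}$ as $v\downarrow 0$, whence $f_\gl(0^+)=\int_0^2F_\gl(v)\,\dd v$ diverges (doubly logarithmically; compare Lemma~\ref{eq: asymptotic of f 1}, where $f_\gl(u)\asymp\log\log\tfrac1u$ --- this divergence is the whole point of the paper). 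Consequently your key quantitative input, that $f_\gl(0)-f_\gl(2w)=\int_0^{2w}F_\gl(v)\,\dd v$ is $O(1/\log\gl)$ uniformly in $w$, is false: that integral is infinite, the non-integrability being at $v=0$ rather than at $v=2w$, so no ``mild $\log\tfrac1w$ factor'' can absorb it. The residual term $j\int_0^2 f_\gl(u+w)f_\gl(u)^{j-1}F_\gl(u)\,\dd u$ is likewise divergent (the \emph{unshifted} $F_\gl(u)$ is non-integrable at $0$), so your identity decomposes a finite quantity as $\infty-\infty$; moreover that residual carries the shift $+w$ on the wrong factor, so it is not of the form covered by the inductive hypothesis in any case.

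The paper's proof avoids the singularity by a different decomposition. It splits at $2w$ (not $w$). On $[0,2w]$ it first removes the singular factor via $F_\gl(u+w)\le F_\gl(w)\le\frac{2}{w\log\gl}$, and only then integrates by parts the remaining $\int_0^{2w}f_\gl(u)^j\,\dd u$ against $\dd u$: the boundary contribution is $2w\,f_\gl(2w)^j$, the term at $u=0$ vanishes because the extra factor $u$ kills the $(\log\log\tfrac1u)^j$ growth, and the derivative term is controlled by $uF_\gl(u)\le\frac{2}{\log\gl}$; iterating yields the coefficients $\frac{j!}{(j-i)!}(4/\log\gl)^{i+1}$. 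On $[2w,2]$ it uses $F_\gl(u+w)\le F_\gl(u)=-f_\gl'(u)$ to get the exact antiderivative $\frac{1}{j+1}f_\gl(2w)^{j+1}$, which is the $\ell=0$ term of \eqref{fF:eq1}. As written, your central estimate is not merely unproved but false, and the argument needs to be restructured along these lines.
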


\begin{proof}
We start using the monotonicity of $F_\lambda$ and noting that for $\lambda> \text{$e^{2(\theta-\gamma)}$}$
and $u\geq 0$:
    \begin{align}\label{F}
    F_\gl(u+w)
     & \le F_\gl(w) = 
     \int^\infty_0\frac{ e^{-\gs w} \,\dd\gs}{\log(\gl+\gs/2)-(\theta-\gamma)} 
      \le 2 \int^\infty_0  \frac{ e^{-\gs w}}{\log\gl} \dd\gs
     = \frac{2}{w\log\lambda} .
    \end{align}
    We next split the integral on the left-hand side of \eqref{fF:eq1} into $\int_0^{2w}(\cdots) \dd u$ 
    and $\int_{2w}^{2}( \cdots )\dd u$, which we call $I$ and $II$, respectively. 
    We start by estimating integral $I$. By \eqref{F} we have, 
    \begin{equation}
    \begin{split}
        I & = 
        \int^{2w}_0 
        F_\gl(u+w)
        f_\gl(u)^j \dd u
        \le \frac{2}{w \log\gl}
        \int^{2w}_0
        f_\gl(u)^j \dd u\\
    \end{split}.
    \end{equation}
     By integration by parts we have,
    \begin{equation*}
    \begin{split}
        \int^{2w}_0 f_\gl(u)^jdu 
        & = 2w f_\gl(2w)^j
        -  j \int^{2w}_0 uf'_\gl(u)f_\gl(u)^{j-1} \dd u\\
        & \le 2w f_\gl (2w )^j 
           +j\frac{2}{\log \gl}\int^{2w}_0f_\gl(u)^{j-1} \dd u,
    \end{split}
    \end{equation*}
    where in the inequality we used (\ref{F}) and $-u f_\gl(u)'= uF_\gl(u)\le \frac{2}{\log\gl}$. Iterating this computation we have that, for $j\ge 1$,
\begin{align*}
    \int^{2w}_0 
    f_\gl(u)^j \dd u
    \le  2w \sum^j_{i=0}
    \frac{j!}{(j-i)!}
    \Big( \frac{2}{\log \gl} \Big)^i
    f_\gl(2w)^{j-i},
\end{align*}
and so 
\begin{align*}
    I \le \sum^j_{i=0}\frac{j!}{(j-i)!} \Big( \frac{4}{\log \gl} \Big)^{i+1}f_\gl(2w)^{j-i}.
\end{align*}
  On the other hand, $II$ is estimated as:
\begin{align*}
    II :=\int_{2w}^{2}  F_\gl(u+w)  f_\gl(u)^j \,\dd u
    \leq \int^{2}_{2w}
    F_\gl(u)f_\gl(u)^j \dd u
    = \frac{1}{j+1}f_\gl(2w)^{j+1},
\end{align*}
   where we used the monotonicity of $F$ and the fact that $f'=-F$. This completes the proof. 
\end{proof}

\begin{lemma}\label{u2}
    Fix $m\ge 2$. For all $1\le k\le m-1$ and $\sum^{m-k}_{i=1}u_i\le 2$ with $0\le u_i \le 2$:
    \begin{equation}
        \idotsint\limits_{\sum^m_{i=m-k+1}u_i \le 2} \,\,
        \prod^m_{r=m-k+1}   F_\gl\big(u_r+\frac{u_{r-1}}{2}\big) \,\dd u_r
        \le 
        \sum^k_{i=0}\frac{c^k_i}{(k-i)!}
        \ra
        \frac{4}{\log\gl}
        \rb^i
        f_\gl
        \ra
         u_{m-k}
        \rb^{k-i}
    \end{equation}
 where $c^k_i$ are combinatorial coefficients defined inductively by
\begin{align}\label{def of c^k_i}
     c^0_0 &=1; \, c^k_i=0\,\text{ for $i>k$} \qquad \text{and}\qquad
     c^{k+1}_i = \sum^{i}_{j=0}c^k_j\qquad \text{ for $i\le k+1$}.
\end{align}

\end{lemma}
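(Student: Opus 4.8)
The plan is to prove Lemma~\ref{u2} by induction on $k$, with Lemma~\ref{lemma:fF} supplying the recursive step and the combinatorial recursion \eqref{def of c^k_i} repackaging the coefficients that appear at each stage.

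For the base case $k=1$ the left-hand side is $\int_{0\le u_m\le 2}F_\gl\big(u_m+\tfrac{u_{m-1}}{2}\big)\,\dd u_m$, which is exactly the integral estimated in Lemma~\ref{lemma:fF} with $j=0$ and $w=u_{m-1}/2$; since $0\le u_{m-1}\le 2$ we have $w\in(0,1)$, so the lemma yields the bound $f_\gl(u_{m-1})+\tfrac{4}{\log\gl}$. A direct check from \eqref{def of c^k_i} gives $c^1_0=c^1_1=1$, so this coincides with the right-hand side of the asserted inequality at level $k=1$.

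For the inductive step, assume the bound at level $k$, where $1\le k\le m-2$ so that $k+1\le m-1$. In the $(k+1)$-fold integral I would integrate the outermost variable $u_{m-k}$ last: for fixed $u_{m-k}$ the inner integral runs over the region $\sum_{i=m-k+1}^m u_i\le 2-u_{m-k}\le 2$, and since both its integrand and the level-$k$ bound involve, among the outer variables, only $u_{m-k}$ (which satisfies $u_{m-k}\le 2$), the induction hypothesis applies and bounds it by $\sum_{i=0}^k \frac{c^k_i}{(k-i)!}\big(\frac{4}{\log\gl}\big)^i f_\gl(u_{m-k})^{\,k-i}$. Substituting this in, the $(k+1)$-fold integral is at most
\begin{equation*}
\sum_{i=0}^k \frac{c^k_i}{(k-i)!}\Big(\frac{4}{\log\gl}\Big)^i
\int_0^2 F_\gl\Big(u_{m-k}+\tfrac{u_{m-k-1}}{2}\Big)\,f_\gl(u_{m-k})^{\,k-i}\,\dd u_{m-k},
\end{equation*}
and I would then apply Lemma~\ref{lemma:fF} to each remaining integral, with $j=k-i$ and $w=u_{m-k-1}/2$ (so $2w=u_{m-k-1}$). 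This produces a double sum over $i\in\{0,\dots,k\}$ and the index $\ell\in\{0,\dots,k-i+1\}$ of that lemma, with general term a constant multiple of $\big(\frac{4}{\log\gl}\big)^{i+\ell} f_\gl(u_{m-k-1})^{\,k+1-i-\ell}$.

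The final step is the resummation. I would reindex by $p:=i+\ell$, so that the power of $f_\gl(u_{m-k-1})$ is $k+1-p$ and the power of $\frac{4}{\log\gl}$ is $p$; the factors $\frac{1}{(k-i)!}$ from the hypothesis and $\frac{(k-i)!}{(k-i+1-\ell)!}$ from Lemma~\ref{lemma:fF} combine to $\frac{1}{(k+1-p)!}$, while the ranges $0\le i\le k$, $0\le\ell\le k-i+1$ collapse to $0\le i\le\min(p,k)$, leaving the scalar coefficient $\sum_{i=0}^{\min(p,k)}c^k_i$. By the defining recursion \eqref{def of c^k_i} (with $c^k_{k+1}=0$ covering the endpoint $p=k+1$) this equals $c^{k+1}_p$, which is precisely the asserted bound at level $k+1$. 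The one genuinely delicate point is this last bookkeeping — matching the triangular ranges of $(i,\ell)$ to exactly the partial sums that \eqref{def of c^k_i} is designed to produce; once that is in place, everything else (the domain inclusion $\sum_{i=m-k+1}^m u_i\le 2-u_{m-k}$, the requirement $w=u_{m-k-1}/2\le 1$, and the standing hypothesis $\lambda> e^{2(\theta-\gamma)}\vee 1$ inherited from Lemma~\ref{lemma:fF}) is immediate from $0\le u_i\le 2$.
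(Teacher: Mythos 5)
Your proposal is correct and follows essentially the same route as the paper: induction on $k$ with the base case and the inductive step both supplied by Lemma~\ref{lemma:fF} (with $j=0$, resp.\ $j=k-i$, and $w=u_{r-1}/2$), followed by the reindexing $p=i+\ell$ and the identification $\sum_{i\le p}c^k_i=c^{k+1}_p$ from \eqref{def of c^k_i}. The bookkeeping you flag as delicate is exactly the step the paper carries out, so nothing further is needed.
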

\begin{proof}
    The proof here is an adaptation of the induction scheme of Lemma 3.9 in \cite{CZ23}. 
    When $k=1$, the statement follows from Lemma \ref{lemma:fF} for $j=0$ and $w=\frac{u_{r-1}}{2}$.  
    Assume the statement holds for some $k$ such that $1\le k\le m-2$. Then for $k+1$ we have by the
     inductive assumption that 
    \begin{equation}
    \begin{split}
    & \idotsint\limits_{\sum_{i=m-k}^m u_i\le 2}
     \prod^m_{r=m-k} F_\gl(u_r+\f{u_{r-1}}{2}) \, \prod^m_{r=m-k} \dd u_r\\
    & \le \int^{2}_0
        \Bigg( \,\,\,\,
        \idotsint\limits_{\sum_{i=m-k+1}^m u_i\le 2} \,\,
        \prod_{r=m-k+1}^m 
        F_\gl(u_r+\f{u_{r-1}}{2}) \dd u_r 
        \Bigg) \,
        F_\gl(u_{m-k}+\f{u_{m-k-1}}{2}) \, \dd u_{m-k}\\
        & \le 
        \int^{2}_0
        \sum^k_{i=0}\frac{c^k_i}{(k-i)!}
        \ra
        \frac{4}{\log\gl}
        \rb^i
        f_\gl
        \ra
        u_{m-k}
        \rb^{k-i}
        F_\gl(u_{m-k}+\f{u_{m-k-1}}{2}) \, \dd u_{m-k} \,.
    \end{split}
    \end{equation}
    Then by Lemma \ref{lemma:fF}, we bound the above by
    \begin{align*}
        & \sum^k_{i=0} \f{c^k_i}{(k-i)!} \ra \f{4}{\log\gl} \rb^i \,\ \sum^{k-i+1}_{l=0} \f{(k-i)!}{(k-i+1-l)!} \ra \f{4}{\log\gl} \rb^l f_\gl \ra u_{m-k-1} \rb^{k-i+1-l} \\
        & = \sum^k_{i=0} \sum^{k-i+1}_{l=0} \f{c^k_i}{(k+1-(i+l))!} \ra \f{4}{\log\gl} \rb^{i+l} f_\gl \ra u_{m-k-1} \rb^{k+1-(i+l)} 
    \end{align*}
    We introduce a new variable $n:=i+l$ to replace $\sum_{l=0}^{k-i+1}$ by $\sum_{n=i}^{k+1}$ and, thus, write the above as:
    \begin{align*}
        &  \sum^k_{i=0} \sum_{n=i}^{k+1} \f{c^k_i}{(k+1-n)!} \ra \f{4}{\log\gl} \rb^{n} f_\gl \ra u_{m-k-1} \rb^{k+1-n} 
         \le \sum_{n=0}^{k+1} \sum_{i=0}^{n} \f{c^k_i}{(k+1-n)!} \ra \f{4}{\log\gl} \rb^{n} f_\gl \ra u_{m-k-1} \rb^{k+1-n}
    \end{align*}
    By the definition of $c^k_i$ in \eqref{def of c^k_i} we complete the proof.
    \color{black}
\end{proof}
\begin{lemma}\label{after u, prop}
There exists constants $C>0$, depending on $h$,
 such that for all $\lambda> \ra e^{2(\theta-\gamma)} \vee 1 \rb$, we have
\begin{equation}\label{eq: I3}
\begin{split}
    \mathfrak{M}_\epsilon^{\theta,h}
    & \le C e^{2\gl} \log(\f{1}{\epsilon})
    \sum_{m\ge 0} {h \choose 2} \left[ {h\choose 2} -1 \right]^{m-1}
    \sum_{i=0}^{m}\frac{c_i^{m}}{(m-i)!}
    \ra
    \frac{4}{\log\gl}
    \rb^i
    f_\gl(\epsilon^2)^{m-i}.
\end{split}
\end{equation}
\end{lemma}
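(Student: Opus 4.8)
The plan is to combine Lemma~\ref{technical1} with Lemma~\ref{u2}, the only substantive remaining work being the single integration over $u_1$. By Lemma~\ref{technical1} it suffices to bound each $\mI^{(\lambda)}_{m,h,\epsilon}$ by the corresponding summand on the right-hand side of \eqref{eq: I3}. For $m=0,1$ this is immediate: $\mI^{(\lambda)}_{0,h,\epsilon}=1$ and $\mI^{(\lambda)}_{1,h,\epsilon}=C\binom{h}{2}\log\tfrac1\epsilon$, whereas the $m=0$ and $m=1$ summands of the series in \eqref{eq: I3} equal, up to the prefactor $Ce^{2\lambda}\log\tfrac1\epsilon$, the quantities $\binom{h}{2}/(\binom{h}{2}-1)$ and $\binom{h}{2}\bigl(f_\lambda(\epsilon^2)+\tfrac4{\log\lambda}\bigr)$; since $e^{2\lambda}\ge e^2$ and $e^{2\lambda}/\log\lambda$ is bounded away from $0$ for $\lambda>1$, these dominate once $C=C(h)$ is taken large.

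For $m\ge2$ I would freeze $u_1$ in the integral defining $\mI^{(\lambda)}_{m,h,\epsilon}$ in \eqref{eq: I2} and perform the integrations over $u_2,\dots,u_m$ first. As then $\sum_{i=2}^m u_i\le 2-u_1\le 2$ and $0\le u_1\le 2$, Lemma~\ref{u2} applies with $k=m-1$, its boundary variable being exactly $u_1$, and gives
\begin{equation*}
\mI^{(\lambda)}_{m,h,\epsilon}\ \le\ \binom{h}{2}\,\big[\binom{h}{2}-1\big]^{m-1}\int_{\epsilon^2}^{2}\frac1{u_1}\,\sum_{i=0}^{m-1}\frac{c_i^{m-1}}{(m-1-i)!}\Big(\frac4{\log\lambda}\Big)^{i} f_\lambda(u_1)^{m-1-i}\,\dd u_1 .
\end{equation*}
Since $f_\lambda$ is non-increasing on $(0,2]$, one has $f_\lambda(u_1)\le f_\lambda(\epsilon^2)$ for $u_1\ge\epsilon^2$; pulling this out and using $\int_{\epsilon^2}^2\dd u_1/u_1=\log\tfrac2{\epsilon^2}\le 3\log\tfrac1\epsilon$ for small $\epsilon$ turns the bound into $3\log\tfrac1\epsilon\,\binom{h}{2}\big[\binom{h}{2}-1\big]^{m-1}\sum_{i=0}^{m-1}\frac{c_i^{m-1}}{(m-1-i)!}(\tfrac4{\log\lambda})^{i}f_\lambda(\epsilon^2)^{m-1-i}$. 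This factor $\int_{\epsilon^2}^2\dd u_1/u_1\asymp\log\tfrac1\epsilon$ is the sole appearance of the cutoff $\epsilon^2$ and is the source of the logarithmic growth.

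It then remains to reorganise the outer sum. Summing the last bound over $m\ge2$ and putting $n=m-1\ge1$ converts it into $(\binom{h}{2}-1)\sum_{n\ge1}\binom{h}{2}\big[\binom{h}{2}-1\big]^{n-1}\sum_{i=0}^{n}\frac{c_i^{n}}{(n-i)!}(\tfrac4{\log\lambda})^{i}f_\lambda(\epsilon^2)^{n-i}$, that is, $(\binom{h}{2}-1)$ times the $n\ge1$ part of the series in \eqref{eq: I3}. Adding back the $m=0,1$ contributions from the first paragraph, absorbing the factor $\binom{h}{2}-1$ and all purely $h$-dependent constants into $C$, and multiplying by $Ce^{2\lambda}$ as in Lemma~\ref{technical1}, yields \eqref{eq: I3}.

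The one point requiring care is precisely this re-indexing: integrating the $m-1$ variables $u_2,\dots,u_m$ through Lemma~\ref{u2} produces combinatorial data at level $m-1$ (coefficients $c_i^{m-1}$, factorials $(m-1-i)!$, powers $f_\lambda(\epsilon^2)^{m-1-i}$), and it is exactly the shift $m\mapsto m-1$ of the outer summation index, together with peeling off one factor $\binom{h}{2}-1$, that upgrades this to the level-$m$ data $(c_i^m,(m-i)!,f_\lambda(\epsilon^2)^{m-i})$ asserted in \eqref{eq: I3}. Morally, the weight $1/u_1$ on the cut-off variable plays the role of one additional $F_\lambda$-type factor whose integration from $\epsilon^2$ simultaneously supplies the extra $\log\tfrac1\epsilon$ and raises the combinatorial level by one; making this bookkeeping come out exactly is the only mild obstacle.
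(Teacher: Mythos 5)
Your proposal is correct and follows essentially the same route as the paper: apply Lemma~\ref{u2} with $k=m-1$ to integrate $u_2,\dots,u_m$, bound $f_\lambda(u_1)\le f_\lambda(\epsilon^2)$ and integrate $\dd u_1/u_1$ over $[\epsilon^2,2]$ to produce the $\log\tfrac1\epsilon$ factor, then shift the summation index $m\mapsto m-1$ while peeling off one factor of $\binom{h}{2}-1$ to upgrade the level-$(m-1)$ combinatorial data to level $m$. The bookkeeping you flag as the delicate point is exactly the step the paper performs, and your treatment of the $m=0,1$ terms is a slightly more explicit version of the paper's.
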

\begin{proof}
    For $m\ge 2$, recall \eqref{eq: I2}:
    \begin{align*}
        \mI^{(\lambda)}_{m,h,\epsilon} & \le {h \choose 2} \left[ {h\choose 2} -1 \right]^{m-1}
        \idotsint\limits_{\sum_i u_i \le 2\,,\, u_1>\epsilon^2}
        \frac{1}{u_1}
        \prod^m_{r=2}F_\gl \big(u_r+ \frac{u_{r-1}}{2} \big) \,\, \dd\vec u .
    \end{align*}
    By Lemma \ref{u2} for $k=m-1$ we have that:
    \begin{equation}\label{before u1}
    \begin{split}
    \mI^{(\lambda)}_{m,h,\epsilon}
    & \le 
    {h \choose 2} \left[ {h\choose 2} -1 \right]^{m-1}
    \int^2_{\epsilon^2}  
    \sum^{m-1}_{i=0}
    \frac{c_i^{m-1} f_\gl(u_1)^{m-1-i}}{(m-1-i)!}
    \Big(
    \frac{4}{\log\gl}
    \Big)^i \, \frac{\dd u_1}{u_1}\, .
    \end{split}
    \end{equation}
    Since $f_\lambda$ is decreasing we have that $f_\lambda(u_1)\leq f_\lambda(\epsilon^2)$, so for 
    $u_1\geq \epsilon^2$:
    \begin{align}\label{F(u1)}
        \int^2_{\epsilon^2} 
        f_\gl(u_1)^{m-1-i} \, \frac{\dd u_1}{u_1}
        & \le 
        \int^2_{\epsilon^2}
        f_\gl(\epsilon^2)^{m-1-i} \, \frac{\dd u_1}{u_1} 
        \le  C \log \Big( \f{1}{\epsilon} \Big) \, f_\gl(\epsilon^2)^{m-1-i} \,.
    \end{align}
    Therefore, we obtain:
    \begin{align*}
        &\mathfrak{M}_\epsilon^{\theta,h}  \le C e^{2\gl} \sum_{m\ge 0} \mI^{(\lambda)}_{m,h,\epsilon}\\
        & \le C e^{2\gl} \ra \mI^{(\lambda)}_{0,h,\epsilon} + \mI^{(\lambda)}_{1,h,\epsilon} + \log\ra \f{1}{\epsilon} \rb 
        \sum_{m\ge 2} 
        {h \choose 2} \left[ {h\choose 2} -1 \right]^{m-1} 
        \sum^{m-1}_{i=0}
        \frac{c_i^{m-1} f_\gl(\epsilon^2)^{m-1-i}}{(m-1-i)!}
        \Big(
        \frac{4}{\log\gl}
        \Big)^i \, 
        \rb\\
        & = C e^{2\gl} \ra \mI^{(\lambda)}_{0,h,\epsilon} + \mI^{(\lambda)}_{1,h,\epsilon} + \ra{h\choose 2}-1\rb \log\ra \f{1}{\epsilon} \rb 
        \sum_{m\ge 1} 
        {h \choose 2} \left[ {h\choose 2} -1 \right]^{m-1} 
        \sum^{m}_{i=0}
        \frac{c_i^{m} f_\gl(\epsilon^2)^{m-i}}{(m-i)!}
        \Big(
        \frac{4}{\log\gl}
        \Big)^i \, 
        \rb\, ,
    \end{align*}
    where in the last step we change the variable $m\mapsto m+1$. The result follows by recalling the definitions of 
     $\mI^{(\lambda)}_{0,h,\epsilon}$ and  $\mI^{(\lambda)}_{1,h,\epsilon}$ from Lemmas \ref{before replica} and 
     \ref{technical1}.
    \end{proof}


\subsection{Combinatorial coeffiients}
We will derive an exact formula for $c^m_i$ defined in \eqref{def of c^k_i}.
\begin{lemma}\label{combinatorial}
For $m\ge i\ge 0$, we have:
\begin{align}\label{eq: exact formula of c^m_i}
    c^m_i = \f{m-i+1}{i!}\f{(m+i)!}{(m+1)!} \le 4^m \,.
\end{align}
\end{lemma}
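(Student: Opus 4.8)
The plan is to prove the closed-form expression
\[
c^m_i = \frac{m-i+1}{i!}\,\frac{(m+i)!}{(m+1)!}
\]
by induction on $m$, using the defining recursion $c^{k+1}_i = \sum_{j=0}^{i} c^k_j$ together with the boundary data $c^0_0 = 1$ and $c^k_i = 0$ for $i > k$, and then derive the bound $c^m_i \le 4^m$ separately from the formula.

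First I would check the base case $m = 0$: the formula gives $c^0_0 = \frac{1}{0!}\cdot\frac{0!}{1!} = 1$, as required, and $c^0_i = 0$ for $i \ge 1$ is consistent with the convention (the formula yields $\frac{1-i}{i!}\cdot\frac{i!}{1!} = 1-i$, which is $0$ only at $i=1$, so strictly speaking one takes $i \le k$ as the regime of validity and treats $c^m_i = 0$ for $i > m$ by fiat; I would state this carefully). For the inductive step, assuming the formula for $c^k_j$ with $0 \le j \le k$, I would compute $c^{k+1}_i = \sum_{j=0}^{i} c^k_j$ for $0 \le i \le k+1$. The main computational task is to evaluate
\[
\sum_{j=0}^{i} \frac{k-j+1}{j!}\,\frac{(k+j)!}{(k+1)!}
\]
and show it equals $\frac{k-i+2}{i!}\,\frac{(k+1+i)!}{(k+2)!}$. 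This is a hypergeometric-type summation identity; I expect the cleanest route is to guess that the partial sums telescope, i.e. to find an explicit sequence $T_j$ with $T_{i} - T_{-1}$ (or $T_i - T_0$ plus a correction) matching the claimed right-hand side, where the summand is $T_j - T_{j-1}$. Concretely, one checks that $\frac{k-j+1}{j!}\frac{(k+j)!}{(k+1)!}$ can be written as a difference of consecutive terms of the form $\frac{a_j}{j!}\frac{(k+j)!}{(k+1)!}$ for suitable $a_j$; solving the telescoping condition determines $a_j$, and then the sum collapses to a boundary term that one simplifies to the target expression. I would also separately verify the edge case $i = k+1$: there $c^{k+1}_{k+1} = \sum_{j=0}^{k+1} c^k_j = \sum_{j=0}^{k} c^k_j = c^{k+1}_k$ (since $c^k_{k+1}=0$), and check this is consistent with the formula, which at $i = m$ reads $c^m_m = \frac{1}{m!}\frac{(2m)!}{(m+1)!} = \frac{1}{m+1}\binom{2m}{m}$, a Catalan number — a reassuring sanity check, and indeed the Catalan recursion is exactly what the telescoping will reproduce.

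For the bound $c^m_i \le 4^m$, I would argue from the formula directly: since $\frac{m-i+1}{i!} \le \frac{m+1}{i!} \le m+1$ for $i\ge 1$ and equals $m+1$ at $i = 0$, and $\frac{(m+i)!}{(m+1)!} = \frac{(m+i)!}{(m+1)!}$, one has $c^m_i \le (m+1)\binom{m+i}{i}\cdot\frac{i!}{(m+1)!}\cdot\frac{(m+1)!}{i!}$ — more cleanly, I would just note $c^m_i = \frac{m-i+1}{m+1}\binom{m+i}{i} \le \binom{m+i}{i} \le \binom{2m}{m} \le 4^m$, using $i \le m$ and the standard central binomial bound $\binom{2m}{m}\le 4^m$. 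The reduction $c^m_i = \frac{m-i+1}{m+1}\binom{m+i}{i}$ comes from rewriting $\frac{1}{i!}\frac{(m+i)!}{(m+1)!} = \frac{(m+i)!}{i!\,(m+1)!} = \frac{1}{m+1}\binom{m+i}{i}$, which is immediate.

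The main obstacle will be the telescoping/hypergeometric identity in the inductive step: verifying $\sum_{j=0}^i c^k_j = c^{k+1}_i$ with the explicit formula is not automatic and requires either spotting the right telescoping decomposition or invoking a known binomial-sum identity (this is essentially a ballot-number / Catalan-triangle identity, and the $c^m_i$ are entries of the Catalan triangle). Once the telescoping form is found the rest is routine factorial bookkeeping. I would organize the write-up so that the telescoping identity is isolated as the key computation, with the base case and the final $4^m$ estimate being short.
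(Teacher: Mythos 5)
Your proposal is correct and follows essentially the same route as the paper: the telescoping decomposition you are seeking is precisely the identity $c^k_j = c^{k+1}_j - c^{k+1}_{j-1}$ (equivalently $c^m_i = c^{m-1}_i + c^m_{i-1}$), which is exactly the two-term recursion the paper extracts from $c^{k+1}_i=\sum_{j\le i}c^k_j$ and then verifies against the closed form by direct factorial manipulation, including the edge cases $i=0$ and $i=m$ that you also flag. Your bound $c^m_i = \frac{m-i+1}{m+1}\binom{m+i}{i}\le\binom{m+i}{i}\le 4^m$ likewise matches the paper's estimate.
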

\begin{proof}
    Given the first equality, the inequality is obvious. Indeed, 
    \begin{align*}
        \f{m-i+1}{i!}\f{(m+i)!}{(m+1)!}
        \le 
        \f{(m+i)!}{i!m!}
        = {m+i \choose i}
        \le {2m \choose i}
        \le \sum_{0\le i\le 2m}{2m\choose i}
        \le 4^m.
    \end{align*}
    To prove the equality, first notice that by definition of $c^m_i$, we could simplify the recursion to:
    \begin{align}\label{new recursion}
        c^{m}_{i}
        =
        \sum^{i}_{j=0}c^{m-1}_j
        =
        c^{m-1}_{i} + \ind_{\{i>0\}}\sum^{i-1}_{j=0}c^{m-1}_j
        =
        c^{m-1}_{i} + c^{m}_{i-1}\ind_{\{i>0\}} \,,
    \end{align}
    for all $1\le i\le m$. It then suffices to verify that the equality of \eqref{eq: exact formula of c^m_i} solves the recursion formula \eqref{new recursion} with boundary conditions $c^0_0=1$ and $c^m_{m+1}=0$ for all $m\in\bbN_0$. Suppose first $m\ge 0$ with $0< i< m$; then:
    \begin{align*}
        c^m_i=\f{m-i+1}{i!} \f{(m+i)!}{(m+1)!} & = 
        \f{m-i+1}{i!} \f{(m+i)!}{(m+1)!} \ra \f{m-i}{m-i+1} \f{m+1}{m+i} + \f{m-i+2}{m-i+1} \f{i}{m+i} \rb \\
        & = \f{m-i}{i!} \f{(m+i-1)!}{m!} + \f{m-i+2}{(i-1)!} \f{(m+i-1)!}{(m+1)!}
        =c^{m-1}_i+c^m_{i-1},
    \end{align*}
    which is exactly \eqref{new recursion}. Next suppose that $0 < i=m$. In this case,
        \begin{align*}
        c^m_m = \f{m -m+1}{m!} \f{(m+m)!}{(m+1)!} = 0+ \f{m-m+2}{(m-1)!} \f{(m+m-1)!}{(m+1)!} 
        =c^{m-1}_m + c^m_{m-1} \,,
    \end{align*}
    which also agrees with \eqref{new recursion}. Lastly, for $i=0$ and $m\in\bbN_0$, we can readily check that $c^m_0=1$ from the claimed formula,
    which again coincides with \eqref{new recursion} with the initial condition $c^0_0=1$.
\end{proof}

\subsection{Final step}
We will next bound \eqref{eq: I3} by the upper bound claimed in Theorem \ref{central} and hence complete the proof. The following asymptotic behavior of $f_\gl$ will be useful:
\begin{lemma}\label{eq: asymptotic of f 1}
    Suppose that $\gl>e^{\theta-\gamma+\f{1}{2} }$. There exists $C>0$ and $\gd_\epsilon := \f{C}{\log\log\f{1}{\epsilon} } $
    such that for all $u\in (0, \epsilon^2]$:
    \begin{align}\label{eq:fe}
    f_\gl(u)\le (1+\gd_\epsilon)\log\log\frac{1}{u}.
    \end{align}
    \end{lemma}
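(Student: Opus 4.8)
The plan is to argue directly from the integral representation \eqref{eq: def of f} of $f_\gl$,
\[
f_\gl(w)=\int_0^\infty \frac{e^{-\gs w}-e^{-2\gs}}{\gs\,\bigl[\log(\gl+\gs/2)-(\gt-\gamma)\bigr]}\,\dd\gs ,
\]
and to extract the leading $\log\log\tfrac1w$ by splitting the $\gs$–integral at a large constant threshold $\gs_0=\gs_0(\gl,\gt)$. Set $D(\gs):=\log(\gl+\gs/2)-(\gt-\gamma)$; the hypothesis $\gl>e^{\gt-\gamma+\frac12}$ guarantees $D(\gs)\ge D(0)=\log\gl-(\gt-\gamma)>\tfrac12>0$ for all $\gs\ge0$, with $D$ increasing, so all the integrals below are finite and nonnegative. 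On $\gs\in(0,\gs_0)$ one bounds $D(\gs)\ge D(0)>\tfrac12$ and $\frac{e^{-\gs w}-e^{-2\gs}}{\gs}\le\frac{1-e^{-2\gs}}{\gs}$, the latter being integrable on $(0,\gs_0)$; this piece contributes at most a constant $C_1=C_1(\gl,\gt)$, uniformly in $w$.

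For the tail $\gs\ge\gs_0$ one must be careful to keep the leading constant exactly $1$. Here I would use $\gl+\gs/2>\gs/2$ to get $D(\gs)>\log\gs-K$ with $K:=\log 2+(\gt-\gamma)$, and then, having chosen $\gs_0\ge e^{2|K|}\vee e^{2}$, the elementary estimate $\frac{1}{\log\gs-K}\le\frac{1}{\log\gs}\bigl(1+\frac{2|K|}{\log\gs}\bigr)$ (valid since $|K|/\log\gs\le\tfrac12$ there). Dropping $e^{-2\gs}$ and using $e^{-\gs w}\le1$ in the correction term, the tail is bounded by
\[
\int_{\gs_0}^{\infty}\frac{e^{-\gs w}}{\gs\log\gs}\,\dd\gs\;+\;2|K|\int_{\gs_0}^{\infty}\frac{\dd\gs}{\gs(\log\gs)^{2}},
\]
where the second integral equals $2|K|/\log\gs_0=:C_2$. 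For the first integral I would split at $\gs=1/w$ (legitimate once $w<1/\gs_0$): on $(\gs_0,1/w)$ the integral of $\frac{1}{\gs\log\gs}$ is $\log\log\tfrac1w-\log\log\gs_0\le\log\log\tfrac1w$, while on $(1/w,\infty)$ the integrand is $\le\frac{w}{\log(1/w)}e^{-\gs w}$, so that contribution is $\le\frac{1}{e\log(1/w)}\le1$ for $w$ small. Collecting the pieces yields a constant $C_\ast=C_\ast(\gl,\gt)$ with $f_\gl(w)\le\log\log\tfrac1w+C_\ast$ for all sufficiently small $w>0$.

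Finally, for $u\in(0,\epsilon^2]$ (with $\epsilon$ small enough that $\epsilon^2$ lies in the range above) one has $\log\log\tfrac1u\ge\log\log\tfrac1{\epsilon^2}\ge\log\log\tfrac1\epsilon$, hence
\[
f_\gl(u)\le\log\log\tfrac1u\Bigl(1+\tfrac{C_\ast}{\log\log\frac1u}\Bigr)\le\Bigl(1+\tfrac{C_\ast}{\log\log\frac1\epsilon}\Bigr)\log\log\tfrac1u ,
\]
which is \eqref{eq:fe} with $C:=C_\ast$ and $\gd_\epsilon:=C/\log\log\tfrac1\epsilon$. The one genuinely delicate point is the tail estimate: the denominator must be written as $\log\gs$ times $1+O(1/\log\gs)$, not crudely as, say, $\tfrac12\log\gs$, since any multiplicative slack there would destroy the sharp constant $1$ in \eqref{eq:fe}; everything else reduces to routine bounds on explicit one–dimensional integrals.
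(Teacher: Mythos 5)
Your proposal is correct and follows essentially the same strategy as the paper's proof: split the $\gs$-integral at a constant threshold and at $\gs\sim 1/u$, bound the off-critical pieces by constants using $\log(\gl+\gs/2)-(\gt-\gamma)\ge\tfrac12$ and the exponential decay, and extract the sharp leading term by comparing $\frac{1}{\log(\gl+\gs/2)-(\gt-\gamma)}$ with $\frac{1}{\log\gs}$ up to an $O\bigl(\frac{1}{(\log\gs)^2}\bigr)$ correction that integrates to a constant. The only difference is organizational (two-way split at $\gs_0$ followed by a later split at $1/w$, versus the paper's three-way split at $C_\gt$ and $1/u$), and the final conversion from the additive bound $\log\log\tfrac1u+C$ to the multiplicative form is identical.
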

    \begin{proof}
    Recall \eqref{eq: def of f}. Let $C_{\gt}:=e^{2\ra \log 2 + \theta-\gamma \rb }$. Without loss of generality,
    assume that $\epsilon$ is small enough, so that for $u\le \epsilon^2$, $1/u>C_\theta$. We next have,
    \begin{align}
        f_\gl(u)
        = & 
        \int^\infty_0  \frac{1}{\gs}\frac{1}{\log(\gl+\gs/2)-\theta-\gamma)} (e^{-\gs u}-e^{-2\gs}) \,\dd\gs \label{eq: f0}\\
        = & \label{eq: f1} 
        \int^{C_{\theta}}_0  \frac{1}{\gs}\frac{1}{\log(\gl+\gs/2)-(\theta-\gamma) } (e^{-\gs u}-e^{-2\gs})  \,\dd\gs \\
        & +\label{eq: f2}
        \int^{\frac{1}{u}}_{C_{\theta}}  \frac{1}{\gs}\frac{1}{\log(\gl+\gs/2)-(\theta-\gamma)} (e^{-\gs u}-e^{-2\gs}) \,\dd\gs\\
        & + \label{eq: f3}
        \int^\infty_\frac{1}{u} \frac{1}{\gs}\frac{1}{\log(\gl+\gs/2)-(\theta-\gamma)} (e^{-\gs u}-e^{-2\gs}) \, \dd\gs
    \end{align}
    We see that in \eqref{eq: f1}, the integrand is bounded. Indeed, given the assumption on $\lambda$, we have:
    \begin{align*}
        \frac{1}{\gs}\frac{1}{\log(\gl+\gs/2)-(\theta-\gamma)} (e^{-\gs u}-e^{-2\gs}) 
        & \le  \f{ 2 \ra e^{-\gs u}-e^{-2\gs} \rb }{\gs} 
        <C_1,
    \end{align*}
  for some finite constant $C_1$. 
    For \eqref{eq: f3}, we notice that:
    \begin{align*}
        \int^\infty_\frac{1}{u}  \frac{\dd\gs}{\gs}\frac{1}{\log(\gl+\gs/2)-(\theta-\gamma)} (e^{-\gs u}-e^{-2\gs})
        & \le 
        2 \int^\infty_\frac{1}{u} (e^{-\gs u}-e^{-2\gs}) \,   \frac{\dd\gs}{\gs}\\
        & \le
        2 \int^\infty_\frac{1}{u}  e^{-\gs u}\,  \frac{\dd\gs}{\gs}
        =
        2 \int^\infty_1   e^{-\gs } \frac{\dd\gs}{\gs} \, <C_2,
    \end{align*}
    for some finite constant $C_2$, given the assumption that $u\leq \epsilon^2$ and $\epsilon$ is assumed to 
    be small enough. 
We claim that the main growth in \eqref{eq: f0} comes from  \eqref{eq: f2}. To this end, we have:
    \begin{align}\label{eq: pre 1/slogs}
    \int^{\frac{1}{u}}_{C_{\theta}} 
    \frac{\dd\gs }{\gs}
    \frac{1}{\log(\gl+\gs/2)-(\theta-\gamma)} 
    (e^{-\gs u}-e^{-2\gs})
    & \le
    \int^{\frac{1}{u}}_{C_{\theta}} 
    \frac{\dd\gs }{\gs \ra \log(\gs/2)-(\theta-\gamma) \rb},
    \end{align}
    Notice that:
    \begin{align*}
        \int^{\frac{1}{u}}_{C_{\theta}} 
        \frac{\dd\gs}{\gs \ra \log(\gs/2)-(\theta-\gamma) \rb} - \int^{\frac{1}{u}}_{C_{\theta}}  \f{\dd\gs}{\gs \log\gs}
        & = \int^{\f{1}{u} }_{C_{\theta} } \f{1}{\gs} \ra \f{1}{\log\gs - \ra \log 2 + (\theta-\gamma) \rb } -\f{ 1 }{ \log\gs } \rb \dd\gs\\
        & \le \int^{\infty }_{C_{\theta} }\f{1}{\gs} \f{ \log 2+(\theta-\gamma) }{ \ra \log \gs \rb^2 -\ra \log 2+(\theta-\gamma) \rb \log\gs } \, \dd\gs\\
        & \le 2\ra \log 2+ (\theta-\gamma) \rb \int^{\infty }_{C_{\theta} }  \f{ \dd\gs }{ \gs \ra \log \gs \rb^2  }
        < C_3.
    \end{align*}
    for some finite constant $C_3$, where we also used the assumption that $C_\theta:=e^{2\ra \log 2 + \theta-\gamma \rb }$.
    Therefore, we can bound \eqref{eq: pre 1/slogs} by:
    \begin{align*}
      \int^{\frac{1}{u}}_{C_{\theta}} \f{1}{\gs\log\gs} \dd \gs +C_3   
    \leq \log\log \f{1}{u} + C,
       \end{align*}
       for some finite constant $C$.
    Putting the bounds for \eqref{eq: f1}-\eqref{eq: f3} together we obtain:
    \begin{align*}
        f_\gl(u) \le  \dlog\f{1}{u}+C \le (1+\f{C}{\dlog \f{1}{\epsilon^2}})\dlog \f{1}{u},
    \end{align*}
    for some $C>0$, when $u\leq \epsilon^2$, from which the result follows.
        \end{proof}

Now we are ready to prove Proposition \ref{thm: g upper bound}.
\begin{proof}[Proof of Proposition \ref{thm: g upper bound}]
We build upon \eqref{eq: I3}: 
\begin{align}
        \mathfrak{M}_\epsilon^{\theta,h}
        \le &  
        C e^{2\gl}
        \log \ra \f{1}{\epsilon} \rb
        \sum_{m\ge 0}
        \sum_{i: i\le m}
        {h\choose 2}
        \qa
        {h\choose 2}-1      
        \qb^{m-1} 
        c^m_i
        \ra \f{4}{\log\gl} \rb^i
        \frac{f^{m-i}_\gl(\epsilon^2)}{(m-i)!}\\
        = &  
        C e^{2\gl}
        \log \ra \f{1}{\epsilon} \rb
        \sum_{i\ge 0}
        \sum_{m: m \ge i}
        {h\choose 2}
        \qa
        {h\choose 2}-1      
        \qb^{m-1} 
        c^m_i
        \ra \f{4}{\log\gl} \rb^i
        \frac{f^{m-i}_\gl(\epsilon^2)}{(m-i)!}\\
        = &  \label{eq: m>k}
        C e^{2\gl}
        \log \ra \f{1}{\epsilon} \rb
        \sum_{i\ge 0}
        \sum_{m \geq i} \ind_{m \geq k}
        {h\choose 2}
        \qa
        {h\choose 2}-1      
        \qb^{m-1} 
        c^m_i
        \ra \f{4}{\log\gl} \rb^i
        \frac{f^{m-i}_\gl(\epsilon^2)}{(m-i)!}\\
        & + \label{eq: m<k}
        C e^{2\gl}
        \log \ra \f{1}{\epsilon} \rb
        \sum_{i\ge 0}
        \sum_{m\ge i}
        \ind_{m<k}
        {h\choose 2}
        \qa
        {h\choose 2}-1      
        \qb^{m-1} 
        c^m_i
        \ra \f{4}{\log\gl} \rb^i
        \frac{f^{m-i}_\gl(\epsilon^2)}{(m-i)!} \,,
\end{align}
where we break the sum over $m$ at $k=C_0 \dlog\f{1}{\epsilon}$ for some $C_0>0$ to be chosen later. We handle \eqref{eq: m>k} first. 
We use the bound $c^m_i \leq 4^m$ from \eqref{eq: exact formula of c^m_i}, to estimate \eqref{eq: m>k} by: 
\begin{align*}
        &  C e^{2\gl} \log(\f{1}{\epsilon})
        \sum_{i \ge 0}
        \sum_{m \ge i } \ind_{m\geq k}
        {h\choose 2} \qa {h\choose 2} -1 \qb^{m-1} 4^m
        \ra \f{4}{\log\gl} \rb^i
        \frac{f_\gl^{m-i}(\epsilon^2)}{(m-i)!} \\
        & \le C e^{2\gl} \log(\f{1}{\epsilon})
        \sum_{i \ge 0}
        \sum_{m \geq i } \ind_{m\geq k}
        \ra \f{h^2}{2} \rb^m 4^m
        \ra \f{4}{\log\gl} \rb^i
        \frac{f_\gl^{m-i}(\epsilon^2)}{(m-i)!} \\
        & = C e^{2\gl} \log(\f{1}{\epsilon})
        \sum_{i \ge 0}
        \sum_{m \ge i} \ind_{  m \ge k  }
        \ra \f{8h^2}{\log\gl} \rb^i
        \frac{\ra 2h^2f_\gl(\epsilon^2) \rb^{m-i}}{(m-i)!} \,.
\end{align*}
Now we introduce a new multiplier, $\mu>0$. As $\ind_{m\ge k}\le e^{(m-k)\mu}$, the above is bounded by:
\begin{equation}\label{small}
\begin{split}
    &  C e^{2\gl} \log(\f{1}{\epsilon})
    \sum_{i \ge 0}
    \sum_{m \ge i} 
    e^{(m-k)\mu}
    \ra \f{8 h^2}{\log\gl} \rb^i
    \frac{\ra 
    2 h^2 f_\gl(\epsilon^2)
    \rb^{m-i}
    }{(m-i)!}\\
    = \,&  C e^{2\gl} \log(\f{1}{\epsilon}) e^{-k\mu}
    \sum_{i\ge 0}
    \ra
    \f{8 e^\mu h^2}{\log\gl}
    \rb^i
    \sum_{m: m\ge i}
    e^{(m-i)\mu}
    \f{
    \ra
    2 h^2 f_\gl( \epsilon^2 )
    \rb^{m-i}
    }{(m-i)!} \,.
\end{split}
\end{equation}
We choose $\gl> \exp (8 e^\mu h^2)$ so that 
\begin{align}\label{eq: C_gl,h}
C \sum_{i\ge 0}
    \ra
    \f{8 e^\mu h^2}{\log\gl}
    \rb^i
    =: C_{\gl,h}<\infty.
\end{align}
Hence, if we also change variables $n:=m-i$, \eqref{small} is bounded by:
\begin{equation}\label{small2}
\begin{split}
    & C_{\gl,h} e^{2\gl} \log(\f{1}{\epsilon}) e^{-k\mu}
    \sum_{n\ge 0}
    e^{n\mu}
    \f{
    \ra
    2 h^2 f_\gl( \epsilon^2 )
    \rb^{n}
    }{n!}
    \le C_{\gl,h} e^{2\gl} \log(\f{1}{\epsilon}) e^{-k\mu} e^{ 2h^2e^\mu f_\gl(\epsilon^2) }
\end{split}
\end{equation}
We recall from \eqref{eq:fe} that $f_\gl(\epsilon^2)\leq(1+\gd_\epsilon)\dlog\f{1}{\epsilon}$ and 
that we chose $k=C_0\dlog\f{1}{\epsilon}$ where we will take $C_0>0$ such that:
\begin{align}\label{eq:C0}
C_0\mu - 2 e^\mu h^2 =: D >1.
\end{align}
Then (\ref{small2}) is bounded above by 
\begin{align}\label{eq: final upper bound of m>k}
C_{\gl,h} e^{2\gl} \log(\f{1}{\epsilon}) e^{-D \dlog(\f{1}{\epsilon})} 
= C_{\gl,h} e^{2\gl} \ra \log(\f{1}{\epsilon}) \rb^{1-D}
\rightarrow 0
\end{align}
as $\epsilon\rightarrow 0$.

Now we handle \eqref{eq: m<k}. By the equality part of \eqref{eq: exact formula of c^m_i}, \eqref{eq: m<k} becomes
\begin{equation}\label{eq: mid bound for m<k}
\begin{split}
    & C e^{2\gl}
    \log \ra \f{1}{\epsilon} \rb
    \sum_{i\ge 0}
    \sum_{m\ge i}
    \ind_{k>m}
        {h\choose 2}
    \qa
    {h\choose 2}-1      
    \qb^{m-1} 
    \f{m-i+1}{i!}\frac{(m+i)!}{(m+1)!}
    \ra \f{4}{\log\gl} \rb^i
    \frac{f^{m-i}_\gl(\epsilon^2)}{(m-i)!}\\
    & = C e^{2\gl}
    \log \ra \f{1}{\epsilon} \rb
    \sum_{0\le m < k}
    \sum_{0\le i\le m}
    {h\choose 2}
    \qa
    {h\choose 2}-1      
    \qb^{m-1} 
    \f{m-i+1}{i!}\frac{(m+i)!}{(m+1)!}
    \ra \f{4}{\log\gl} \rb^i
    \frac{f^{m-i}_\gl(\epsilon^2)}{(m-i)!}\\
    & = C e^{2\gl}
    \log \ra \f{1}{\epsilon} \rb
    \sum_{0\le m < k}
    \sum_{0\le i\le m}
    {h\choose 2}
    \qa
    {h\choose 2}-1      
    \qb^{m-1} 
    {m\choose i}
     \frac{m-i+1}{m!} \frac{(m+i)!}{(m+1)!}
    \ra \f{4}{\log\gl} \rb^i
    f^{m-i}_\gl(\epsilon^2)\\
    & =
    C e^{2\gl}
    \log \ra \f{1}{\epsilon} \rb \f{{h\choose 2}}{{h\choose 2}-1}
    \sum_{0\le m < k}
    \qa {h\choose 2}-1 \qb^m 
    \frac{1}{m!}
    \sum_{i: 0\le i\le m}
    {m\choose i}
    \frac{ (m-i+1) \, (m+i)!}{(m+1)!}
    \ra \f{4}{\log\gl} \rb^i
    f^{m-i}_\gl(\epsilon^2).
    \end{split}
    \end{equation}
    Notice that:
    \begin{align*}
    (m-i+1)\frac{(m+i)!}{(m+1)!}
    \le 
    \frac{(m+i)!}{m!}\le (2m)^i.
    \end{align*}
    Therefore, \eqref{eq: mid bound for m<k} is bounded by:
    \begin{align}
    & 
    C e^{2\gl}
    \log \ra \f{1}{\epsilon} \rb \f{{h\choose 2}}{{h\choose 2}-1}
    \sum_{0\le m\le k}
    \qa {h\choose 2}-1 \qb^m 
    \frac{1}{m!}
    \sum_{i: 0\le i\le m}
    {m\choose i}
    (2m)^i
    \ra \f{4}{\log\gl} \rb^i
    f^{m-i}_\gl(\epsilon^2)\\
    =&  \,C e^{2\gl} \label{eq: binomial}
    \log \ra \f{1}{\epsilon} \rb \f{{h\choose 2}}{{h\choose 2}-1}
    \sum_{0\le m\le k}
    \qa {h\choose 2}-1 \qb^m 
    \frac{1}{m!}
    \ra \f{8m}{\log\gl} + f_\gl(\epsilon^2) \rb^m\\
    \le &  \, C e^{2\gl}
    \log \ra \f{1}{\epsilon} \rb \f{{h\choose 2}}{{h\choose 2}-1}
    \sum_{0\le m < \infty}
    \qa {h\choose 2}-1 \qb^m 
    \frac{1}{m!}
    \ra \f{8k}{\log\gl} + f_\gl(\epsilon^2) \rb^m\\
    =&\, \label{eq: exponential}
    C e^{2\gl}
    \log \ra \f{1}{\epsilon} \rb \f{{h\choose 2}}{{h\choose 2}-1}
    e^{ \ra {h\choose 2}-1 \rb \ra f_\gl(\epsilon^2)+\f{8k}{\log\gl} \rb } \,.
\end{align}
where we have completed a binomial sum (resp. an exponential sum) to obtain \eqref{eq: binomial} (resp. \eqref{eq: exponential}). Again, we recall the asymptotic behavior of $f$ from  \eqref{eq:fe} and definition of $k$, and bound \eqref{eq: exponential} by:
\begin{equation}\label{eq: final bound for m<k}
\begin{split}
    & C e^{2\gl}
    \log \ra \f{1}{\epsilon} \rb \f{{h\choose 2}}{{h\choose 2}-1}
    e^{ \ra {h\choose 2}-1 \rb \ra (1+\gd_\epsilon)\dlog(\f{1}{\epsilon})+\f{8C_0\dlog(\f{1}{\epsilon})}{\log\gl} \rb }\\
    & =
    C e^{2\gl}
    \log \ra \f{1}{\epsilon} \rb \f{{h\choose 2}}{{h\choose 2}-1}
    e^{ \ra {h\choose 2}-1 \rb \ra 1+\gd_\epsilon + \f{8C_0}{\log\gl} \rb \dlog\f{1}{\epsilon} }\\
    & = C e^{2\gl}
    \f{{h\choose 2}}{{h\choose 2}-1} \ra \log \f{1}{\epsilon}  \rb^{{h\choose 2} \ra 
1+\gd_\epsilon +\f{8C_0}{\log\gl} \rb} \,.
\end{split}
\end{equation}

Putting \eqref{eq: final upper bound of m>k} and \eqref{eq: final bound for m<k} together, we obtain:
\begin{align}
    \mathfrak{M}_\epsilon^{\theta,h} & \le C e^{2\gl} \f{{h\choose 2}}{{h\choose 2}-1} \ra \log \f{1}{\epsilon} \rb^{ {h\choose 2} ( 1+\gd_\epsilon +\f{8C_0}{\log\gl} )} 
    + C_{\gl,h} e^{2\gl} \ra \log\f{1}{\epsilon} \rb^{1-D} \notag \\
    & \le C_h e^{2\gl} \ra \log \f{1}{\epsilon} \rb^{{h\choose 2} ( 1+\gd_\epsilon +\f{8C_0}{\log\gl} ) }
    = C_h \ra \log\f{1}{\epsilon} \rb^{ {h\choose 2} } \ra \log\f{1}{\epsilon} \rb^{ \f{2\gl}{\log\log\f{1}{\epsilon}} +  \gd_\epsilon+\f{8C_0  }{\log\gl}  }, \label{def: C}
\end{align}
To achieve the desired result we set $\gl=\gl_\epsilon$ and we require:
\begin{align*}
    \gd_{\epsilon,\gl} :=
    \f{2\gl_\epsilon}{\log\log\f{1}{\epsilon}} +  \gd_\epsilon+\f{8C_0  }{\log\gl_\epsilon}  = o(1),
\end{align*}
Recall from Lemma \ref{eq: asymptotic of f 1} that $\gd_\epsilon = O(1/\log\log\f{1}{\epsilon})$, so the optimising level of $\gl_\epsilon$ occurs when $\f{\gl_\epsilon}{\log\log\f{1}{\epsilon} } \sim \f{ 1 }{\log\gl_\epsilon}$. One choice would be $\gl_\epsilon=\f{\log\log\f{1}{\epsilon} }{\log\log\log\f{1}{\epsilon}}$. Substituting into \eqref{def: C}, we obtain for some $C_0'>0$:
\begin{align}\label{eq: optimal upper bound}
    \mathfrak{M}_\epsilon^{\theta,h} \le \ra \log\f{1}{\epsilon} \rb^{ {h\choose 2}  + \f{C_0'}{\log\log\log\f{1}{\epsilon} }  }.
\end{align}

\end{proof}

\section{Lower bound}\label{sec:lower}
We will again reduce the lower bound in Theorem \ref{central}  to a lower bound for the quantity $\bbE\Big[ \Big( \mathscr{Z}_{t}^\theta(g_{\epsilon^2}) \Big)^h\Big]$ for which it has been proven in \cite{CSZ23b} that
 there exists $\eta>0$, independent of $\epsilon$, such that
\begin{align}\label{GCI}
\bbE\Big[ \Big( 2 \mathscr{Z}_{t}^\theta(g_{\epsilon^2}) \Big)^h\Big]
\geq (1+\eta) \bbE\Big[ \Big( 2 \mathscr{Z}_{t}^\theta(g_{\epsilon^2}) \Big)^2\Big]^{h \choose 2}.
\end{align}
The lower bound in \eqref{thm:mom} then follows from the second moment asymptotic \eqref{2mom-as}.
We note that a weak version of inequality \eqref{GCI} is a consequence of the Gaussian Correlation Inequality \cite{R14, LM17}. More work
was required in \cite{CSZ23b} to obtain the uniform strict inequality.

In order to reduce the lower bound on $\bbE \qa \ra \mathscr{Z}^\gt_1(\cU_{B(0,R\epsilon)}) \rb \qb ^h$ to a lower bound as in \eqref{GCI}
we will bound $\cU_{B(0,\epsilon)}$ from below by $g_{\epsilon^2/2}\ind_{B(0,\epsilon)}$ and control the contribution from $\mathscr{Z}^\gt_1 \ra g_{\epsilon^2/2}\ind_{B^c(0,\epsilon)} \rb$.  This is summarised in the following lemma:
\begin{lemma}\label{thm: outer lower bound}
    For all $\rho\in (0,1)$, there exists $R>0$:
        \begin{align}\label{eq: outer lower bound}
        \bbE  \Big[ \Big( \mathscr{Z}^\gt_1 \big( g_{\epsilon^2/2} \ind_{ \{ |\cdot| \le R \epsilon \} } \big) \Big)^h \Big] 
        \ge (1-\rho) \bbE   \Big[ \Big(\mathscr{Z}^\gt_1 \big( g_{ \epsilon^2/2 } \big) \Big)^h\Big]  +   
        o\Big( \big(\log\f{1}{\epsilon} \big)^{h\choose 2} \Big)
    \end{align}

    where $|\cdot|$ represents the usual 2d Euclidean norm.
\end{lemma}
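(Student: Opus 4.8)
The idea is to cut the Gaussian test function at the scale $R\epsilon$, estimate the outer piece crudely, and show it is negligible once $R$ is chosen large in terms of $\rho$ and $h$ alone; in fact the argument will establish \eqref{eq: outer lower bound} with no $o(\cdot)$ term at all. Set
\[
\sfX:=\mathscr{Z}^\gt_1\big(g_{\epsilon^2/2}\,\ind_{\{|\cdot|\le R\epsilon\}}\big),\qquad
\sfY:=\mathscr{Z}^\gt_1\big(g_{\epsilon^2/2}\,\ind_{\{|\cdot|> R\epsilon\}}\big),
\]
so that $\sfX,\sfY\ge 0$ (the Critical 2d SHF is a positive measure), $\sfX+\sfY=\mathscr{Z}^\gt_1(g_{\epsilon^2/2})$, and $\bbE[(\sfX+\sfY)^h]=\shf<\infty$ by Proposition \ref{thm: g upper bound}. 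Since $h\ge 2$, $(\sfX+\sfY)^h-\sfX^h=\int_0^{\sfY}h(\sfX+s)^{h-1}\,\dd s\le h\,\sfY\,(\sfX+\sfY)^{h-1}$; taking expectations and applying H\"older with conjugate exponents $h$ and $\tfrac{h}{h-1}$,
\[
\bbE[\sfX^h]\ \ge\ \bbE[(\sfX+\sfY)^h]-h\,\bbE[\sfY^h]^{1/h}\,\bbE[(\sfX+\sfY)^h]^{(h-1)/h}.
\]
Thus it suffices to prove $\bbE[\sfY^h]\le (\rho/h)^h\,\bbE[(\sfX+\sfY)^h]$ for a suitable choice of $R$.

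To estimate $\bbE[\sfY^h]$, start from the elementary pointwise inequality $g_{\epsilon^2/2}(x)\,\ind_{\{|x|>R\epsilon\}}\le 2e^{-R^2/2}\,g_{\epsilon^2}(x)$, which holds because $|x|^2/\epsilon^2\ge |x|^2/(2\epsilon^2)+R^2/2$ on $\{|x|>R\epsilon\}$ together with $\tfrac{1}{\pi\epsilon^2}e^{-|x|^2/(2\epsilon^2)}=2g_{\epsilon^2}(x)$. Positivity of the SHF then gives $\sfY\le 2e^{-R^2/2}\,\mathscr{Z}^\gt_1(g_{\epsilon^2})$, hence $\bbE[\sfY^h]\le(2e^{-R^2/2})^h\,\bbE[\mathscr{Z}^\gt_1(g_{\epsilon^2})^h]$. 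The crucial ingredient is the \emph{$\epsilon$-uniform} comparison $\bbE[\mathscr{Z}^\gt_1(g_{\epsilon^2})^h]\le\bbE[\mathscr{Z}^\gt_1(g_{\epsilon^2/2})^h]=\bbE[(\sfX+\sfY)^h]$. This follows from the semigroup identity $g_{\epsilon^2}=g_{\epsilon^2/2}*g_{\epsilon^2/2}$: writing $g_{\epsilon^2}(x)=\int_{\R^2}g_{\epsilon^2/2}(x-v)\,g_{\epsilon^2/2}(v)\,\dd v$ and using Tonelli, $\mathscr{Z}^\gt_1(g_{\epsilon^2})=\int_{\R^2}\mathscr{Z}^\gt_1\big(g_{\epsilon^2/2}(\cdot-v)\big)\,g_{\epsilon^2/2}(v)\,\dd v$; by Jensen against the probability density $g_{\epsilon^2/2}(v)\,\dd v$, $\mathscr{Z}^\gt_1(g_{\epsilon^2})^h\le\int_{\R^2}\mathscr{Z}^\gt_1\big(g_{\epsilon^2/2}(\cdot-v)\big)^h\,g_{\epsilon^2/2}(v)\,\dd v$; and taking $\bbE$, interchanging (all integrands are nonnegative, and every moment is finite by Proposition \ref{thm: g upper bound}), and using that for each fixed $v$ one has $\bbE[\mathscr{Z}^\gt_1(g_{\epsilon^2/2}(\cdot-v))^h]=\bbE[\mathscr{Z}^\gt_1(g_{\epsilon^2/2})^h]$ — read off the moment formula \eqref{mom-formula}, whose integrand is invariant under the simultaneous translation $x^i\mapsto x^i+v$, $x_r\mapsto x_r+v$, $y_r\mapsto y_r+v$ of all spatial variables, since every heat-kernel weight and every collision event depends only on spatial differences — yields the claim.

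Combining these, $\bbE[\sfY^h]^{1/h}\le 2e^{-R^2/2}\,\bbE[(\sfX+\sfY)^h]^{1/h}$, and the bound from the first paragraph gives $\bbE[\sfX^h]\ge(1-2h\,e^{-R^2/2})\,\bbE[(\sfX+\sfY)^h]=(1-2h\,e^{-R^2/2})\,\shf$. Choosing $R=R(\rho,h)$ with $2h\,e^{-R^2/2}\le\rho$ then proves \eqref{eq: outer lower bound}, in fact without any $o(\cdot)$ correction. I expect the only genuinely non-routine step to be the comparison $\bbE[\mathscr{Z}^\gt_1(g_{\epsilon^2})^h]\le\bbE[\mathscr{Z}^\gt_1(g_{\epsilon^2/2})^h]$: it is tempting to bound the left-hand side directly via Proposition \ref{thm: g upper bound}, but the $(\log\tfrac1\epsilon)^{o(1)}$ correction there is not $o\big((\log\tfrac1\epsilon)^{{h\choose 2}}\big)$ and, for $R$ fixed, would swamp the gain $e^{-R^2/2}$; the convexity-and-translation-invariance argument above is precisely what keeps the whole estimate uniform in $\epsilon$.
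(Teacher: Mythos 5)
Your proof is correct, and in the key step it takes a genuinely different route from the paper. Both arguments share the same skeleton — split $g_{\epsilon^2/2}$ at radius $R\epsilon$, and extract the factor $e^{-R^2/2}$ from the pointwise bound $g_{\epsilon^2/2}\ind_{\{|\cdot|>R\epsilon\}}\le 2e^{-R^2/2}g_{\epsilon^2}$ — but they diverge afterwards. The paper expands $(\sfX+\sfY)^h-\sfX^h$ into $2^h-1$ cross-terms, each dominated by a ``mixed'' moment with one test function $g_{\epsilon^2}$ and the remaining $h-1$ equal to $g_{\epsilon^2/2}$; it then compares this mixed moment to $\bbE[(\mathscr{Z}^\gt_1(g_{\epsilon^2/2}))^h]$ by matching the two chaos expansions, whose only discrepancy is the time horizon ($1+2\epsilon^2$ versus $1+\epsilon^2$), and controls the difference $\mathscr{D}$ by re-running the entire upper-bound machinery of Section \ref{sec:upper} after gaining a factor $1/\log\tfrac{1}{\epsilon}$ from the forced localisation of $b_m$ in a window of width $\epsilon^2$; this $\mathscr{D}$ is precisely the origin of the $o\big((\log\tfrac1\epsilon)^{h\choose 2}\big)$ term in the statement. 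You instead handle the cross-terms by the derivative bound plus H\"older, and replace the chaos-expansion comparison with the exact inequality $\bbE[(\mathscr{Z}^\gt_1(g_{\epsilon^2}))^h]\le\bbE[(\mathscr{Z}^\gt_1(g_{\epsilon^2/2}))^h]$ obtained from the semigroup identity, Jensen, and translation invariance of the moment formula \eqref{mom-formula} (which is indeed manifest, since every kernel and the events $\cS_{i_r,j_r}$ depend only on spatial differences). Your route is shorter, avoids any re-use of the upper-bound estimates, and yields the lemma with no $o(\cdot)$ correction at all; what it trades away is robustness — it leans on the exact Gaussian semigroup structure of the mollifier, whereas the paper's term-by-term comparison would survive for test functions without this self-reproducing property. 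One small point worth making explicit if you write this up: the H\"older step needs $\bbE[(\sfX+\sfY)^h]<\infty$ for each fixed $\epsilon$, which you correctly source from Proposition \ref{thm: g upper bound} (or directly from the moment formula).
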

\begin{proof}
    Recall the fixed time marginals of the SHF from \eqref{one-time}. We then have that
    \begin{align}\label{eq: pre outer lower bound}
    \bbE  \qa \ra \mathscr{Z}^\gt_1 \ra g_{\epsilon^2/2} \rb \rb^h \qb
        -\bbE  \qa \ra \mathscr{Z}^\gt_1 \ra g_{\epsilon^2/2} \ind_{ \{ |\cdot| \le R \epsilon \} } \rb \rb^h \qb 
    \le \ra 2^h-1 \rb  \int_{|y_1|>R\epsilon} \prod_{i=1}^h g_{\epsilon^2/2}(y_i) \bbE \qa \prod_{i=1}^h \mathscr{Z}_1^\gt(\ind, \dd y_i)  \qb.
    \end{align}
    Observe that for $|x|>R\epsilon$:
    \begin{align*}
        g_{\epsilon^2/2}(x) 
        = \f{1}{\pi \epsilon^2} e^{-\f{|x|^2}{\epsilon^2} }
        = \f{1}{\pi \epsilon^2} \, e^{-\f{|x|^2}{2\epsilon^2} } \, e^{-\f{|x|^2}{2\epsilon^2} }
        \le \f{1}{\pi \epsilon^2} \, e^{-\f{R^2}{2} } \, e^{-\f{|x|^2}{2\epsilon^2} }
        = 2 e^{-\f{R^2}{2} } g_{\epsilon^2} (x).
    \end{align*}
    Substituting $x$ with $y_1$ and inserting this estimate in \eqref{eq: pre outer lower bound}, we obtain:
    \begin{align}\label{eq: outer upper 1}
        & \bbE  \qa \ra \mathscr{Z}^\gt_1 \ra g_{\epsilon^2/2} \rb \rb^h \qb
        -\bbE  \qa \ra \mathscr{Z}^\gt_1 \ra g_{\epsilon^2/2} \ind_{ \{ |\cdot| \le R \epsilon \} } \rb \rb^h \qb  \notag\\
        & \le 2 \ra 2^h-1 \rb e^{-\f{R^2}{2} } \int_{\bbR^{2h}} g_{\epsilon^2}(y_1) \prod_{i=2}^h g_{\epsilon^2/2}(y_i) \bbE \qa \prod_{i=1}^h \mathscr{Z}_1^\gt(\ind, \dd y_i)  \qb  \,.
    \end{align}
    We compare \eqref{eq: outer upper 1} and $\bbE  \qa \ra \mathscr{Z}^\gt_1 \ra g_{\epsilon^2/2} \rb \rb^h \qb$. We do so via chaos expansions.  Following the same procedure as in the derivation of \eqref{mom-formula-g} but with $\prod_{i=1}^h g_{\epsilon^2/2}(y_i)$ replaced by $g_{\epsilon^2}(y_1) \prod_{i=2}^h g_{\epsilon^2/2}(y_i)$  :
    \begin{align}\label{eq: 2epsilon}
    & \int_{\bbR^{2h}} g_{\epsilon^2}(y_1) \prod_{i=2}^h g_{\epsilon^2/2}(y_i) \bbE \qa \prod_{i=1}^h \mathscr{Z}_1^\gt(\ind, \dd y_i)  \qb  \notag\\
    & \le \sum_{m\ge 0}  \,\,\,\,\,\,\, (2\pi)^m \hskip -1.2cm\sumtwo{ \{i_1,j_1\},...,\{i_m,j_m\} \in \{1,...,h\}^2}
    {\text{with $\{i_k,j_k\} \neq \{i_{k+1},j_{k+1}\}$ for $k=1,...,m-1$}}    \notag \\
   &  \iint_{\substack{\epsilon^2
   \le a_1< b_1<...< a_m < b_m\le 1+2\epsilon^2 \\ x_1,y_1,...,x_m, y_m \in \R^2}}  \hskip -1.3cm
    \,\,\,
    g_{\f{a_1}{2}} (x_1)^2  \,\,
   \prod_{r=1}^m    G_\gt(b_r-a_r) g_{\frac{b_r-a_r}{4}}(y_r-x_r)  \,\, \ind_{\cS_{i_r,j_r}} \notag \\
    &\qquad \qquad  \times \Big( \prod_{1\le r\le m-1} g_{\frac{a_{r+1}-b_{\sfp(i_{r+1})}}{2}}(x_{r+1} - y_{\sfp(i_{r+1}) }) \, 
    g_{\frac{a_{r+1}-b_{\sfp(j_{r+1})}}{2}} (x_{r+1}- y_{\sfp(j_{r+1})}) \Big) \dd \vec\bx \,\dd \vec\by \,\dd \vec a \,\dd \vec b.
    \end{align}
    Note there is a difference at the integration range of $a_1,b_1,..., a_m, b_m$. They are integrated up to $1+2\epsilon^2$ due to $g_\epsilon^2(y_1)$, which pushes the upper bound up by $2\epsilon^2$ instead of $\epsilon^2$. 

    We also recall from \eqref{mom-formula-g} :
    \begin{align}\label{eq: epsilon}
    & \bbE  \qa \ra \mathscr{Z}^\gt_1 \ra g_{ \epsilon^2/2 }  \rb \rb^h \qb\notag\\
    & = \sum_{m\ge 0}  \,\,\,\,\,\,\, (2\pi)^m \hskip -1.2cm\sumtwo{ \{i_1,j_1\},...,\{i_m,j_m\} \in \{1,...,h\}^2}
    {\text{with $\{i_k,j_k\} \neq \{i_{k+1},j_{k+1}\}$ for $k=1,...,m-1$}}    \notag \\
   &  \iint_{\substack{\epsilon^2
   \le a_1< b_1<...< a_m < b_m\le 1+\epsilon^2 \\ x_1,y_1,...,x_m, y_m \in \R^2}}  \hskip -1.3cm
     \,\,\,
    g_{\f{a_1}{2}} (x_1)^2  \,\,
   \prod_{r=1}^m    G_\gt(b_r-a_r) g_{\frac{b_r-a_r}{4}}(y_r-x_r)  \,\, \ind_{\cS_{i_r,j_r}} \notag \\
    &\qquad \qquad  \times \Big( \prod_{1\le r\le m-1} g_{\frac{a_{r+1}-b_{\sfp(i_{r+1})}}{2}}(x_{r+1} - y_{\sfp(i_{r+1}) }) \, 
    g_{\frac{a_{r+1}-b_{\sfp(j_{r+1})}}{2}} (x_{r+1}- y_{\sfp(j_{r+1})}) \Big) \dd \vec\bx \,\dd \vec\by \,\dd \vec a \,\dd \vec b.
    \end{align}
    Note that the only difference with \eqref{eq: 2epsilon}
     is at the integration range of $a_1,b_1,...,a_m,b_m$. Therefore, we have that:
    \begin{align}\label{eq: g+D}
        \bigg| \int_{\bbR^{2h}} g_{\epsilon^2}(y_1) \prod_{i=2}^h g_{\epsilon^2/2}(y_i) \bbE \qa \prod_{i=1}^h \mathscr{Z}_1^\gt(\ind, \dd y_i)  \qb  
        - \bbE  \qa \ra \mathscr{Z}^\gt_1 \ra g_{ \epsilon^2/2 }  \rb \rb^h \qb \bigg|
        \le \mathscr{D},
    \end{align}
    where 
    \begin{align*}
    \mathscr{D} & := 
    \sum_{m\ge 0}  \,\,\,\,\,\,\, (2\pi)^m \hskip -1.2cm\sumtwo{ \{i_1,j_1\},...,\{i_m,j_m\} \in \{1,...,h\}^2}
    {\text{with $\{i_k,j_k\} \neq \{i_{k+1},j_{k+1}\}$ for $k=1,...,m-1$}}    \notag \\
   &  \iint_{\substack{\epsilon^2
   < a_1< b_1<...< a_m<b_m \le 1+2\epsilon^2 \\ 1+\epsilon^2<a_i\le 1+2\epsilon^2 \text{ or } 1+\epsilon^2<b_i\le 1+2\epsilon^2 \text{ for some $i=1,...,m$}\\ x_1,y_1,...,x_m, y_m \in \R^2}}  \hskip -1.3cm
    \,\,\,
    g_{\f{a_1}{2}} (x_1)^2  \,\,
   \prod_{r=1}^m    G_\gt(b_r-a_r) g_{\frac{b_r-a_r}{4}}(y_r-x_r)  \,\, \ind_{\cS_{i_r,j_r}} \notag \\
    &\qquad \qquad  \times \Big( \prod_{1\le r\le m-1} g_{\frac{a_{r+1}-b_{\sfp(i_{r+1})}}{2}}(x_{r+1} - y_{\sfp(i_{r+1}) }) \, 
    g_{\frac{a_{r+1}-b_{\sfp(j_{r+1})}}{2}} (x_{r+1}- y_{\sfp(j_{r+1})}) \Big) \dd \vec\bx \,\dd \vec\by \,\dd \vec a \,\dd \vec b .
    \end{align*}
    We will control $\mathscr{D}$. First, notice that the constraint 
    \begin{align*}
        1+\epsilon^2<a_i\le 1+2\epsilon^2 \text{ or } 1+\epsilon^2<b_i\le 1+2\epsilon^2 \text{ for some $i=1,...,m$},
    \end{align*}
    implies $1+\epsilon^2< b_m \le 1+2\epsilon^2$. Therefore, we relax the constraint in $\mathscr{D}$ to obtain:
    \begin{align}\label{eq: outer upper bound bm}
    \mathscr{D} & \le 
    \sum_{m\ge 0}  \,\,\,\,\,\,\, (2\pi)^m \hskip -1.2cm\sumtwo{ \{i_1,j_1\},...,\{i_m,j_m\} \in \{1,...,h\}^2}
    {\text{with $\{i_k,j_k\} \neq \{i_{k+1},j_{k+1}\}$ for $k=1,...,m-1$}}    \notag \\
   &  \iint_{\substack{\epsilon^2
   < a_1< b_1<...< a_m \le 1+2\epsilon^2 \\ ((1+\epsilon^2) \vee a_m) < b_m \le 1+2\epsilon^2 \\ x_1,y_1,...,x_m, y_m \in \R^2}}  \hskip -1.3cm
     \,\,\,
    g_{\f{a_1}{2}} (x_1)^2  \,\,
   \prod_{r=1}^m    G_\gt(b_r-a_r) g_{\frac{b_r-a_r}{4}}(y_r-x_r)  \,\, \ind_{\cS_{i_r,j_r}} \notag \\
    &\qquad \qquad  \times \Big( \prod_{1\le r\le m-1} g_{\frac{a_{r+1}-b_{\sfp(i_{r+1})}}{2}}(x_{r+1} - y_{\sfp(i_{r+1}) }) \, 
    g_{\frac{a_{r+1}-b_{\sfp(j_{r+1})}}{2}} (x_{r+1}- y_{\sfp(j_{r+1})}) \Big) \dd \vec\bx \,\dd \vec\by \,\dd \vec a \,\dd \vec b.
    \end{align}
    We first perform the integration over $y_m$:
    \begin{align*}
        \int_{\bbR^2} g_{ \f{b_m-a_m}{4} }(y_m-x_m) \dd y_m = 1.
    \end{align*}
    By Proposition \ref{asymptotic of G}, it is not difficult to see there exists $C=C_\gt>0$ such that for all $\epsilon\in(0,1)$:
    \begin{align}\label{eq: int of bm}
        \int^{1+2\epsilon^2}_{a_m \vee (1+\epsilon^2)} G_\gt (b_m-a_m) \dd b_m \le  \f{C}{\log\f{1}{\epsilon}} .
    \end{align}
    So \eqref{eq: outer upper bound bm} becomes:
    \begin{align}\label{eq: after bm}
    \mathscr{D} & \le \f{C}{\log\f{1}{\epsilon}}
    \sum_{m\ge 0}  \,\,\,\,\,\,\, (2\pi)^m \hskip -1.2cm\sumtwo{  \{i_1,j_1\},...,\{i_m,j_m\} \in \{1,...,h\}^2}
    {\text{with $\{i_k,j_k\} \neq \{i_{k+1},j_{k+1}\}$ for $k=1,...,m-1$}}    \notag \\
   &  \iint_{\substack{\epsilon^2
   < a_1< b_1<...< a_m \le 1+2\epsilon^2  \\ x_1,y_1,...,x_m \in \R^2}}  \hskip -1.3cm
     \,\,\,
    g_{\f{a_1}{2}} (x_1)^2  \,\,
   \prod_{r=1}^{m-1}    G_\gt(b_r-a_r) g_{\frac{b_r-a_r}{4}}(y_r-x_r)  \,\, \ind_{\cS_{i_r,j_r}} \notag \\
    &\qquad \qquad  \times \Big( \prod_{1\le r\le m-1} g_{\frac{a_{r+1}-b_{\sfp(i_{r+1})}}{2}}(x_{r+1} - y_{\sfp(i_{r+1}) }) \, 
    g_{\frac{a_{r+1}-b_{\sfp(j_{r+1})}}{2}} (x_{r+1}- y_{\sfp(j_{r+1})}) \Big) \dd \vec\bx \,\dd \vec\by \,\dd \vec a \,\dd \vec b.
    \end{align}
    Then, following the same computation from Lemma \ref{lemma: sum over spatial variables} onwards in the upper bound section, we obtain:
    \begin{align*}
        \mathscr{D} \le \f{1}{\log\f{1}{\epsilon}}\big( \log\f{1}{\epsilon}  \big)^{ {h\choose 2}(1+o(1)) }
        = o \Big( \big( \log\f{1}{\epsilon} \big)^{{h\choose 2}} \Big).
    \end{align*}
    Combining this with \eqref{eq: outer upper 1} and \eqref{eq: g+D} by choosing $R>\sqrt{2\log \f{2 \ra 2^h-1 \rb }{\rho}}$, we obtain the bound \eqref{eq: outer lower bound}.
\end{proof}
\begin{proof}[Proof of the Lower bound in \eqref{central}]
We will first prove that for some fixed $R>0$, there exists $C=C(\gt,h)>0$ such that
\begin{align*}
    \bbE \qa \ra \mathscr{Z}^\gt_1(\cU_{B(0,R\epsilon)}) \rb^h \qb
    \ge C \ra \log\f{1}{\epsilon} \rb^{h\choose 2},
\end{align*}
and from this we will deduce the statement for $\cU_{B(0,\epsilon)}$. 
For $R>1$ we have:
\begin{align*}
    \cU_{B(0,R\epsilon)}(\cdot) = \f{1}{\pi R^2 \epsilon^2} \ind_{B(0,R \epsilon)}(\cdot)
    \ge \f{1}{R^2} g_{\epsilon^2/2} \ind_{B(0,R\epsilon)} (\cdot).
\end{align*}
Therefore,
\begin{align}
    &\bbE \qa \ra \mathscr{Z}^\gt_1(\cU_{B(0,R\epsilon)}) \rb^h \qb
    \ge \f{1}{R^{2h}}\bbE \qa \ra \mathscr{Z}^\gt_1( g_{\epsilon^2/2} \ind_{B(0,R\epsilon)} ) \rb^h \qb \,. \notag 
\end{align}
By Lemma \ref{thm: outer lower bound}, for any $\rho\in(0,1)$, there exists $R>0$:
\begin{align*}
    \bbE \qa \ra \mathscr{Z}^\gt_1(\cU_{B(0,R\epsilon)}) \rb^h \qb
    & \ge \f{1}{R^{2h}}\ra (1-\rho) \bbE \qa \ra \mathscr{Z}^\gt_1( g_{\epsilon^2/2} ) \rb^h \qb
    + o \ra \ra \log \f{1}{\epsilon} \rb^{h\choose 2} \rb \rb \,.
\end{align*}
By \eqref{2mom-as} and \eqref{GCI}, there exists $C=C(\gt,h)$ such that $\bbE \qa \ra \mathscr{Z}^\gt_1( g_{\epsilon^2/2} ) \rb^h \qb \ge C \ra\log\f{1}{\epsilon}\rb^{h\choose 2}$. So we obtain the bound:
\begin{align*}
    \bbE \qa \ra \mathscr{Z}^\gt_1(\cU_{B(0,R\epsilon)}) \rb^h \qb
    \ge \f{C_{\rho, \gt, h}}{ R^{2h} }  \ra \log\f{1}{\epsilon} \rb^{h\choose 2}.
\end{align*}
Now, 
\begin{align*}
    \bbE \qa \ra \mathscr{Z}^\gt_1(\cU_{B(0,\epsilon)}) \rb^h \qb
    = \bbE \qa \ra \mathscr{Z}^\gt_1(\cU_{B(0,R \times \f{\epsilon}{R} )}) \rb^h \qb
    \ge C_{\rho, \gt, h, R} \ra \log\f{R}{\epsilon} \rb^{h\choose 2}  ,
\end{align*}
and we complete the proof.
\end{proof}
\vskip 4mm
{\bf Acknowledgments.} We thank Francesco Caravenna and Rongfeng Sun for useful comments.
\vskip 4mm
{\bf Funding.} The authors have no relevant financial or non-financial interests to disclose.
\vskip 4mm
{\bf Data Availibility Statement.} The manuscript has no associated data.

\end{document}